\tikzset{snake it/.style={decorate, decoration=snake}}
\tikzstyle{edge}=[very thick]
\definecolor{bostonuniversityred}{rgb}{0.8, 0.0, 0.0}
\definecolor{arsenic}{rgb}{0.23, 0.27, 0.29}
\tikzstyle{diredge}=[postaction={decorate,decoration={markings,
\newcommand{\fitellipsis}[2] 
{\draw [fill=gray]let \p1=(#1), \p2=(#2), \n1={atan2(\y2-\y1,\x2-\x1)}, \n2={veclen(\y2-\y1,\x2-\x1)}
    in ($ (\p1)!0.5!(\p2) $) ellipse [ x radius=\n2/2+0cm, y radius=0.4cm, rotate=\n1];
}
\newtheorem{theorem}{Theorem}[section]
\newtheorem{lemma}[theorem]{Lemma}
\newtheorem{proposition}[theorem]{Proposition}
\newtheorem{remark}[theorem]{Remark}
\newtheorem{problem}[theorem]{Problem}
\theoremstyle{definition}
\newtheorem{definition}[theorem]{Definition}
\author[Das, Dragani\'{c}, Steiner]{Shagnik Das  \and Nemanja Dragani\'{c}  \and Raphael Steiner }
\address[Das]{Department of Mathematics, National Taiwan University, Taiwan}
\address[Dragani\'{c}]{Department of Mathematics, ETH Z\"{u}rich, Switzerland}
\address[Steiner]{Institute of Theoretical Computer Science, ETH Z\"{u}rich, Switzerland}
\email{\tt shagnik@ntu.edu.tw}
\email{\tt nemanja.draganic@math.ethz.ch}
\email{\tt raphaelmario.steiner@inf.ethz.ch}
\thanks{The first author was supported by the Deutsche Forschungsgemeinschaft (DFG) project 415310276. The second author was supported in part by SNSF Grant 200021\_196965. The third author was supported by an ETH Z\"{u}rich Postdoctoral Fellowship.
}
\date{\today}
\title{Tight bounds for divisible subdivisions}
\begin{document} 
\maketitle

\begin{abstract}
Alon and Krivelevich proved that for every $n$-vertex subcubic graph $H$ and every integer $q \ge 2$ there exists a (smallest) integer $f=f(H,q)$ such that every $K_f$-minor contains a subdivision of $H$ in which the length of every subdivision-path is divisible by $q$. Improving their superexponential bound, we show that $f(H,q) \le \frac{21}{2}qn+8n+14q$, which is optimal up to a constant multiplicative factor.
\end{abstract}

\section{Introduction}
Enjoying a long tradition in graph theory, the typical extremal question asks which conditions we can impose on a graph to force its containment of a given subgraph. Famous examples of this class of problems are Tur\'{a}n's theorem~\cite{Tur41}, describing the smallest average degree that guarantees the existence of a complete subgraph of specified size, and Dirac's Theorem~\cite{Dir52}, which gives a sharp minimum degree threshold for the existence of a Hamiltonian cycle. In this paper we shall be concerned with the existence of subdivisions of a fixed graph, and well-known results in this direction are due to Bollob\'{a}s and Thomason~\cite{bolthom} and Koml\'{o}s and Szemer\'{e}di~\cite{komszem}, who showed that any graph of average degree $\Theta(t^2)$ contains a subdivision of $K_t$. 

While results of this kind are of fundamental importance in extremal and structural graph theory, they share the shortcoming that they require the host graphs to be reasonably dense, and do not, for instance, yield anything for graphs with bounded maximum degree. These very sparse graphs arise naturally in several applications, and it is therefore of interest to study other structural conditions, different from the average or minimum degree, that guarantee the existence of desired subgraphs (necessarily themselves of small maximum degree). We follow this line of research by solving a problem introduced by Alon and Krivelevich~\cite{alon} regarding the existence of so-called \emph{divisible subdivisions} in graphs containing a large clique minor.

Before stating our main result, let us describe the necessary definitions and background. Throughout this paper, a \emph{$K_f$-minor} is  defined as a graph $G$ whose vertex set is partitioned into $f$ disjoint non-empty sets $X_1,\ldots,X_f$, such that for every $i \in [f]$, the induced subgraph $G[X_i]$ is connected, and for every $i \neq j \in [f]$, there exists at least one edge in $G$ with endpoints in $X_i$ and $X_j$. The sets $X_1,\ldots,X_f$ are referred to as \emph{supernodes} or \emph{branch sets} of the $K_f$-minor $G$. 

Given a graph $H$, a \emph{subdivision} of $H$ is any graph $H'$ obtained from $H$ by replacing its edges with internally vertex-disjoint paths connecting the original endpoints of the edges in $H$. A vertex in $H'$ corresponding to an original vertex of $H$ is called a \emph{branch vertex} of $H'$, while all remaining vertices are called \emph{subdivision vertices}. The paths in $H'$ replacing the edges of $H$ are called \emph{subdivision paths}.
A subdivision $H'$ of $H$ is called \emph{$q$-divisible} if all its subdivision paths are of length divisible by $q$. 

Subdivisions and minors of graphs with constraints on the lengths of paths have received significant attention in the literature. For example, Alon, Krivelevich and Sudakov~\cite{alon2} showed that every $n$-vertex graph of average degree $\varepsilon n$, for fixed $\varepsilon>0$, contains a subdivision of $K_k$ in which every subdivision-path has length $2$ (thus, a $2$-divisible subdivision), where $k=\Omega(\sqrt{n})$. For more results of this nature, we refer the reader to ~\cite{bollobas,geelen,drag1,drag2,gir,kaw,kaw1,kaw2,kaw3,kaw4,sewell,thomassen2}.

Another standout result is due to Thomassen~\cite{thomassen}, who gave general sufficient conditions for finding subdivisions with modular constraints on the lengths of the subdivision paths. He proved that, given any graph $H$ and, for every edge $e \in E(H)$, an assignment of two natural numbers $d(e)$ and $k(e)$, there exists an integer $c$ (depending only on $H$ and the sequences $d(e), k(e)$) such that every graph of chromatic number at least $c$ contains a subdivision of $H$ in which each edge $e \in E(H)$ is replaced by a subdivision path whose length is congruent to $d(e)$ modulo $k(e)$. Furthermore, if for no edge $e \in E(H)$ the number $d(e)$ is odd while $k(e)$ is even, then there exists an integer $c'$ such that for every graph of minimum degree at least $c'$ the same conclusion holds. The latter result in particular shows that for every graph $H$ and every integer $q$, any graph of sufficiently large minimum degree (in terms of $H$ and $q$) contains a $q$-divisible $H$-subdivision.

As alluded to earlier, the results mentioned above, and other results on parity-constrained subdivisions in the literature, only apply to dense graphs. Thus, if the average degree of the host graph is small (just slightly above $2$, say), then no sufficient conditions for divisible subdivisions were known. Alon and Krivelevich~\cite{alon} filled this gap by providing a much more general sufficient condition in the case that $H$ is subcubic. 

\begin{theorem} \label{thm:alonkrivelmainresult}
For every graph $H$ with $\Delta(H) \le 3$ and every integer $q \ge 2$ there exists a (smallest) integer $f=f(H,q) \ge 1$ such that every $K_f$-minor contains a $q$-divisible subdivision of $H$ as a subgraph.
\end{theorem}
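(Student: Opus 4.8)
The plan is to establish the quantitative bound $f(H,q)\le\tfrac{21}{2}qn+8n+14q$, where $n:=\card{V(H)}$, which in particular proves Theorem~\ref{thm:alonkrivelmainresult}. I would begin with the easy reductions: since the components of $H$ can be embedded in disjoint blocks of branch sets and isolated vertices are free, we may assume $H$ is connected with $\delta(H)\ge 1$, so $\card{E(H)}\le\tfrac32 n$. Given $G$ with a $K_f$-minor with branch sets $X_1,\dots,X_f$, replace each $G[X_i]$ by a spanning tree $T_i$ and fix for every ordered pair $(i,j)$ one cross-edge, with endpoint $p_{ij}\in V(T_i)$; it then suffices to build the $q$-divisible subdivision inside $\bigcup_i T_i$ together with the chosen cross-edges. (That $\Delta(H)\le 3$ is needed is already visible here: $G$ may itself have maximum degree $3$, since a subdivision of a cubic graph can have an arbitrarily large clique minor, and then $G$ contains no subdivision of any graph with a vertex of degree $\ge 4$.)

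The construction has two phases. In \emph{Phase 1} I would set aside disjoint pools of branch sets — one ``home'' branch set per vertex of $H$, a bounded number of auxiliary ones per vertex, one per edge of $H$, and a reservoir $\mathcal R$ of $\Theta(qn)$ further branch sets — and build for each $v\in V(H)$ a \emph{vertex gadget} from the home branch set of $v$ and its auxiliaries: a connected subgraph carrying a distinguished vertex $x_v$ with $\deg_H(v)\le 3$ internally disjoint paths to $\deg_H(v)$ distinct outgoing cross-edges, one earmarked per edge of $H$ at $v$. Producing such a gadget is a Menger-type exercise: a spanning tree joined by a cross-edge to every other branch set always contains, after absorbing $O(1)$ auxiliary branch sets to separate its outgoing cross-edges from a possible cut vertex, a vertex with three internally disjoint tree-paths to three prescribed ports. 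Joining the earmarked exits of $x_u$ and $x_v$ for each $uv\in E(H)$, routing through one fresh reservoir branch set where needed to preserve internal disjointness, yields a subdivision $H'$ of $H$ in $G$ with explicitly known subdivision-path lengths $\ell_{uv}$ — not yet $q$-divisible.

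In \emph{Phase 2} I would repair divisibility edge by edge. For the current edge $uv$, put $r:=(-\ell_{uv})\bmod q$ and reroute $P_{uv}$ — fixing its endpoints, staying internally disjoint from all other subdivision-paths — so that its length grows by an amount $\equiv r\pmod q$ while spending at most $O(q)$ fresh branch sets of $\mathcal R$ (roughly $7q$, so that the dominant cost over the $\le\tfrac32 n$ edges is $\tfrac{21}{2}qn$). The mechanism is to splice a detour into $P_{uv}$ at one of its chosen cross-edges: leave the current branch set, thread through a chosen sequence of untouched reservoir branch sets — each traversed along the tree-path between two of its cross-edge endpoints — and re-enter, rejoining $P_{uv}$. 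The crux is to prove that, for \emph{every} host $G$, the length-changes realizable by such detours of total length $O(q)$ already cover a complete residue system modulo $q$; combining the $n$ home branch sets, the $O(n)$ gadget branch sets, a $\Theta(q)$ global reserve, and $O(q)$ per edge, and tracking the constants, then yields $\tfrac{21}{2}qn+8n+14q$.

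The main obstacle is exactly this Phase-2 flexibility claim, which must hold uniformly over all hosts with a large clique minor — and the adversarial cases are precisely the rigid, highly structured $G$ (e.g.\ a subdivision of a cubic graph in which every branch-set spanning tree is a subdivided star, or a balanced binary tree with cross-edges only at its leaves), where single detours seem to contribute only a proper subgroup of $\mathbb Z_q$. The resolution is to juggle several degrees of freedom at once — the number of reservoir branch sets traversed (contributing one unit per connecting cross-edge), which pairs of ports are used inside each of them (a genuine analysis of the tree-distances between the $\ge f-1$ ports of a subcubic tree), and which cross-edge of $P_{uv}$ the detour is spliced at — and to observe that the same rigidity that kills detours forces the Phase-1 lengths $\ell_{uv}$ into a controlled congruence class to begin with, so the two regimes are handled uniformly. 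Everything else — the reductions, the gadget construction, and the constant-tracking — I expect to be comparatively routine.
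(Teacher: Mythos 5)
Your high-level shape (build some $H$-subdivision first, then repair each subdivision path modulo $q$ by splicing in cheap detours, with a dichotomy for ``rigid'' hosts) is the right one, but the two places where you defer to a claim are exactly where the theorem lives, and as stated your resolution of the rigid case is not just unproven but false without a further idea. First, the Phase-2 flexibility claim needs a uniform mechanism, and the obstruction is not a single all-or-nothing rigidity: the set of length-changes achievable by bounded detours can generate any proper subgroup $W<\mathbb{Z}_q$, and this can degrade in stages as you consume branch sets. The paper handles this with a genuine dichotomy (Proposition~\ref{prop:connectors}): either one finds a ``connector'' spanning at most $7|B|$ supernodes whose internal switches realize every element of the current subgroup $B$, or \emph{all} relevant cycle weights lie in a proper subgroup $B'<B$ and one descends to a $B'$-restricted subminor, losing only $7|B|$ supernodes; iterating, the subgroup halves each time, so the total descent cost is the geometric sum $14q$. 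Proving the ``rigid'' horn of this dichotomy requires its own machinery (the reduction to permissible cycles meeting at most five supernodes, Lemma~\ref{lem:even paths}, and the subgroup-generation Lemma~\ref{lem:subgroup}); ``juggling several degrees of freedom'' among tree-distances of ports does not substitute for it.

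Second, and more concretely, your claim that ``the same rigidity that kills detours forces the Phase-1 lengths $\ell_{uv}$ into a controlled congruence class'' is wrong as stated when $q$ is even. Rigidity (all permissible cycle weights in a subgroup $B$) only forces $2\ell(P)\in B$ for the connecting paths, not $\ell(P)\in B$: already for $q=2$ all cycles through your gadgets can be even while the paths between your chosen gadget centers $x_u,x_v$ are odd, and then no detour whatsoever can fix the parity of $P_{uv}$. The fix is not in the detours but in the \emph{choice of branch vertices}: one must prepare roughly $\sigma(\mathbb{Z}_q)\cdot n=2n$ candidate central vertices (the clusters of four supernodes in the paper's proof), colour them by the coset of their connecting-path weight in the index-$\le 2$ quotient, and take the $n$ branch vertices from one colour class, so that every pairwise connecting path has weight in the final subgroup $B_m$ and the $B_m$-connectors can then finish the job. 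This selection step is the source of the $8n$ term in the bound you quote, and nothing in your sketch plays its role; without it Phase 2 cannot succeed on hosts such as a clique with every edge subdivided $q-1$ times after perturbing which vertex of each branch set you designate as $x_v$.
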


The main advantage of the result of Alon and Krivelevich relies in the fact that it is possible to find (arbitrarily) large complete minors even in classes of graphs with bounded maximum degree. For example, it was shown by Kawarabayashi and Reed~\cite{kawreed} that every graph without sublinear separators contains a large complete minor; in particular it is known (c.f.~Krivelevich~\cite{krivel}) that this property holds for essentially all graphs (even those of bounded degree) with sufficiently good expansion properties. Another interesting class of graphs for which the result applies are graphs of minimum degree at least $3$ and large girth (see the results in~\cite{thomassengirth,diestelgirth,kuhngirth}).  Hence, Theorem~\ref{thm:alonkrivelmainresult} guarantees the existence of $q$-divisible $H$-subdivisions in these sparse graph classes.

Theorem~\ref{thm:alonkrivelmainresult} is qualitatively optimal in the sense that for every $q \ge 2$ there exist complete $K_f$-minors for arbitrarily large $f$ with maximum degree $3$ and such that every path between a pair of vertices of degree $3$ has length divisible by $q$. Such a minor would not contain a subdivision of a graph with maximum degree at least $4$, nor would it contain a subdivision of a cubic graph with a subdivision path of non-zero length modulo $q$.

However, the result proved in~\cite{alon} was quantitatively far from optimal: following the proof of Theorem~\ref{thm:alonkrivelmainresult} in~\cite{alon}, it gives at best an upper bound on $f(H,q)$ which is of magnitude $(q^2n)^{q^3n}$, where $n=v(H)$. In contrast, the best lower bound on $f(H,q)$ for subcubic graphs $H$ on $n$ vertices and $m$ edges we are aware of is $m(q-1) + n$, obtained by considering the complete graph of order $m(q-1)+n-1$ (which is too small to host a $q$-divisible subdivision of~$H$). 

Consequently, Alon and Krivelevich posed the problem of improving the bound on $f(H,q)$. In this paper we determine the value of $f(H,q)$ for all possible choices of $H$ and $q$ up to a constant multiplicative error.

\begin{theorem}\label{thm:mainresult}
Let $H$ be an $n$-vertex graph with $e(H) = m$ and $\Delta(H)  \le 3$. Then, for every integer $q \ge 2$, it holds that
$$m(q-1)+n \le f(H,q) \le 7mq+8n+14q,$$ and hence $f(H,q)=\Theta(mq+n)$.
\end{theorem}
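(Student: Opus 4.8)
The plan is to prove the lower bound by the extremal example indicated before the statement, and to spend all the effort on the upper bound. For the lower bound, the complete graph on $n+m(q-1)-1$ vertices is a $K_{n+m(q-1)-1}$-minor, while any $q$-divisible subdivision of $H$ contains at least $n$ branch vertices and at least $q-1\ge 1$ subdivision vertices on each of its $m$ subdivision paths, hence at least $n+m(q-1)$ vertices; so this minor is too small, giving $f(H,q)\ge n+m(q-1)$. The real content is that every $K_f$-minor with $f=7mq+8n+14q$ contains a $q$-divisible subdivision of $H$.

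So let $G$ be a $K_f$-minor with branch sets $X_1,\dots,X_f$, and fix a spanning tree $T_i$ of $G[X_i]$ for each $i$. First I would designate $n$ of the branch sets, indexed $\sigma(v)$ for $v\in V(H)$, to carry the branch vertices, leaving a reservoir of $f-n=7mq+7n+14q$ branch sets. Since all pairs of branch sets are adjacent in the minor, any edge $uv\in E(H)$ can be routed from $X_{\sigma(u)}$ to $X_{\sigma(v)}$ through a chosen sequence of reservoir branch sets as interior; the total length of such a path is the sum, over the visited branch sets, of a tree-path between the (forced) entry vertex and the chosen exit vertex, plus the number of crossing edges. The overall strategy is to process the $m$ edges one at a time, each through its own block of reservoir branch sets disjoint from the earlier blocks and from the $X_{\sigma(v)}$, arranging that each subdivision path has length divisible by $q$ (and $\ge q$) and that all paths are internally disjoint.

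Routing a single $q$-divisible path for $e=uv$ I would split into a \emph{coarse} step and an \emph{absorbing} step. The coarse step traverses a short sequence of reservoir branch sets from $X_{\sigma(u)}$ to $X_{\sigma(v)}$; since inside each visited branch set the path is forced to be the tree-path between its entry and exit vertices, this yields a path of an essentially predetermined length $\ell_e$. The absorbing step must then lengthen the path by any prescribed amount in $\{0,1,\dots,q-1\}$ to reach a total length divisible by $q$, and this is done by diverting the path through a dedicated block $\mathcal{B}_e$ of about $7q$ further reservoir branch sets. The hard part will be the absorbing lemma: one needs that inside such a block $q$ consecutive values for the contributed length are simultaneously realizable. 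The mechanism I envisage exploits the clique structure among the block's branch sets — at each step one is free to choose which branch set of $\mathcal{B}_e$ to move to next, hence which exit vertex of the current branch set, hence (via its spanning tree) the length consumed inside it — and one must control these shifts so that the achievable total lengths form an interval of length $q$, despite branch sets possibly being large "bushy" trees rather than long paths. This is where the constant $7$ should come from.

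Everything else is bookkeeping. To keep the $m$ subdivision paths internally disjoint, process the edges in turn, always taking the coarse sequence and the block $\mathcal{B}_e$ from the currently unused part of the reservoir. To handle a branch vertex $v$ of degree $3$, first fix, for each of the three incident edges, the port vertex of $X_{\sigma(v)}$ through which that subdivision path approaches $b_v$, and then take $b_v$ to be the \emph{median} of these three ports in the tree $T_{\sigma(v)}$ — the unique vertex lying on all three pairwise tree-paths — so that the three tree-paths from $b_v$ to the ports are internally disjoint. Tallying up, we use $n$ branch sets for branch vertices, a constant number plus at most $7q$ per edge for the coarse and absorbing steps, and a further $O(q)$ of slack for one global absorber and for the coarse segments, which fits within $7mq+8n+14q$; combined with the lower bound this yields $f(H,q)=\Theta(mq+n)$.
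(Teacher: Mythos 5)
Your lower bound is fine and matches the paper. The upper bound, however, hinges on an ``absorbing lemma'' that is false as you state it. You ask that, inside a dedicated block of roughly $7q$ reservoir branch sets, the achievable contributed lengths form an interval of $q$ consecutive integers, i.e.\ that every residue modulo $q$ is realizable, so that any coarse path can be corrected. The canonical obstruction (the very reason the theorem is restricted to subcubic $H$ and to divisibility by $q$) kills this: take $K_f$ and subdivide every edge into a path of length exactly $q$. This is a $K_f$-minor in which \emph{every} path between vertices of degree at least $3$ has length divisible by $q$, so no matter how you weave through a block of branch sets, the length of the detour is fixed modulo $q$; no block of any size realizes two distinct residues, let alone an interval of length $q$. (Such a graph still contains a $q$-divisible $H$-subdivision, but only because the residue you would need to absorb happens to be $0$ — which your argument has no way of guaranteeing in general.) The correct statement is necessarily relative: the set of shifts realizable by rerouting is (essentially) a subgroup $B\le\mathbb{Z}_q$, and when it is a proper subgroup this failure must be converted into structural information about the host graph. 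This is exactly what the paper's Proposition~\ref{prop:connectors} does: either one finds a $B$-connector (a chain of cycles, where switching between the two arcs of a cycle shifts the weight by $x_i-y_i$, and these shifts generate $B$) using $O(|B|)$ supernodes, or one passes to a $B'$-restricted subminor for a proper subgroup $B'<B$ and recurses, losing only $14q$ supernodes in total over all descents.

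The second missing ingredient, forced by the same obstruction, is the choice of branch vertices. Once the group has descended to some $B_m\le\mathbb{Z}_q$, the connectors can only correct residues lying in $B_m$, so the ``default'' length of the path joining two prospective branch vertices must itself lie in $B_m$. Your proposal fixes the $n$ host branch sets and the port/median vertices before any of this, with no mechanism to control these residues. The paper handles it by taking $n\sigma(\mathbb{Z}_q)$ candidate clusters (this is where the $8n=4n\sigma(\mathbb{Z}_q)$ term comes from), colouring candidate central vertices by cosets of $B_m$ in $\{a: 2a\in B_m\}$, and selecting $n$ candidates in a single colour class, after which a cycle-parity computation (using that permissible cycles have weight in $B_m$ and that $2w(e)\in B_m$ for every edge) shows all pairwise connecting paths have weight in $B_m$. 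Your median-in-the-tree trick for degree-$3$ branch vertices does coincide with the paper's use of central vertices, and the global bookkeeping (disjoint blocks per edge) is in the right spirit, but without the subgroup-descent dichotomy and the residue-compatible selection of branch vertices the plan cannot be completed.
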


In the special case where $H$ is a cycle, Alon and Krivelevich~\cite{alon} proved that for some constant $C$, every $K_f$-minor with $f \ge C q \log q$ contains a cycle of length divisible by $q$. The correct order of magnitude in this case was determined independently by M\'{e}sz\'{a}ros and the third author~\cite{tamas} and by Arsovski (personal communication), who showed that $f \ge C q$ is sufficient. Theorem~\ref{thm:mainresult} thus completes the picture by extending these sharp bounds to all subcubic graphs.

In fact, we obtain Theorem~\ref{thm:mainresult} as a special case of a more general result, which naturally generalises the setting of divisible subdivisions to that of Abelian groups, as also mentioned by Alon and Krivelevich. Given an abelian group $(A,+)$, we call a graph \emph{$A$-weighted} if each edge of the graph is equipped with a weight $a \in A$. This setting is studied in a subfield of algebraic Ramsey theory known as \emph{zero-sum theory}, and we refer the reader to the survey of Caro~\cite{caro} for an overview of the area. One important graph-theoretic parameter studied in the area is the \emph{zero-sum Ramsey number}, which measures the smallest size of a weighted complete graph in which one is guaranteed to find a desired subgraph of total weight zero. We refer to~\cite{alon3,bialo,caro1,caro2,caro3,caro4,caro5,caro6,caro7} for examples of results on this parameter. 

Here, instead of a $q$-divisible subdivision, we aim for finding a subdivision of a fixed graph $H$ such that the sum of the weights along any subdivision path equals $0 \in A$, and call such a subdivision an \emph{$A$-divisible $H$-subdivision}.

The bound we will obtain for this problem depends on the parameter of an abelian group defined by
$$\sigma(A) = \max_{B \le A}\left\{\frac{|\{a \in A : 2a \in B\}|}{|B|}\right\}.$$
For example, observe that for every integer $q \ge 2$, we have $\sigma(\mathbb{Z}_q)=1$ if $q$ is odd and $\sigma(\mathbb{Z}_q)=2$ if $q$ is even. With this notation in place, we can now state our result.

\begin{restatable}{theorem}{abelian}\label{thm:groupresult}
For every subcubic graph $H$ with $n$ vertices and $m$ edges and for every finite abelian group $(A,+)$, it holds that every $A$-weighted $K_f$-minor with $$f\geq 7m|A|+ 4 n \sigma(A) + 14 |A|$$
contains an $A$-divisible $H$-subdivision. 
\end{restatable}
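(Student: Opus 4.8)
The plan is to proceed in two phases: first use the large clique minor to extract a highly structured intermediate object — a complete minor on a smaller clique together with a rich family of short, weight-labelled paths between branch sets — and then perform a greedy embedding of the subdivision $H'$ of $H$ into this object, correcting the weight of each subdivision path to $0 \in A$ by a judicious choice among several available routings. Throughout, recall that a subdivision path of $H'$ corresponds to an edge $e$ of $H$, and $H$ is subcubic with $m$ edges and $n \le \tfrac{2}{3}m+O(1)$ vertices, so the dominant cost should be $\Theta(m|A|)$, matching the $7m|A|$ term.

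First I would set up the \emph{reservoir of detour gadgets}. Inside a $K_f$-minor with $f \approx 7m|A| + 4n\sigma(A) + 14|A|$, reserve $n$ branch sets to play the roles of the $n$ branch vertices of $H'$; since each such branch vertex has degree $\le 3$ it must be incident to $\le 3$ subdivision paths, and the key is that within each branch set $G[X_i]$ (which is connected) we can route internally, so the branch vertex is flexible. The remaining $\approx 7m|A| + (4\sigma(A)-?)n + 14|A|$ branch sets are used to build, for each edge $e \in E(H)$, a bundle of pairwise (internally) disjoint connecting paths realising a controlled set of weight-values in $A$. Concretely, for each edge I want to use a block of roughly $7|A|$ branch sets arranged so that I can pass through them in many orders / with many choices, and by collecting the edge-weights encountered along the way realise every coset of the relevant subgroup; the parameter $\sigma(A)$ enters because when two paths through the same vertex set can be ``swapped'' the difference in accumulated weight lies in $\{a \in A : 2a \in B\}$ for the subgroup $B$ generated so far, which is exactly the ratio controlled by $\sigma(A)$. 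This is the step where I expect to spend the most care: one must show that a bundle of size linear in $|A|$ per edge, plus the $4n\sigma(A)$ slack distributed over the branch vertices, genuinely suffices to hit $0$ on every path simultaneously, and not merely one path at a time — i.e. the choices for different edges must be made compatibly, which is why the paths are internally vertex-disjoint and why the factor must be $7$ rather than something smaller.

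Second, with the reservoir in hand, I would embed $H'$ edge by edge (equivalently, process the edges $e_1, \dots, e_m$ of $H$ in any order). When processing $e = uv$, the branch vertices for $u$ and $v$ have already been placed in their branch sets $X_u, X_v$; I route a subdivision path from $X_u$ to $X_v$ through the dedicated block of $\approx 7|A|$ branch sets for $e$, using the connectivity within each traversed branch set to choose, among the available weights, one that makes the total accumulated weight equal to $0 \in A$. The internal disjointness of the blocks for distinct edges guarantees that previously embedded subdivision paths are never disturbed; the length of each subdivision path is automatically a multiple of nothing in particular in the group setting — we only need weight $0$ — so no divisibility bookkeeping is needed beyond the weight sum. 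Finally, I would check the arithmetic: $n$ for the branch sets, an additive $O(n\sigma(A))$ for the degree-$\le 3$ flexibility at branch vertices (each needing up to $\sigma(A)$-many alternatives for up to $\le 3$ incident paths, hence the $4n\sigma(A)$), about $7|A|$ per edge for the $m$ edges, and a global additive $O(|A|)$ overhead (the $+14|A|$) for connecting the pieces together; summing gives $7m|A| + 4n\sigma(A) + 14|A|$, as claimed. Theorem~\ref{thm:mainresult} then follows by taking $A = \mathbb{Z}_q$ and using $\sigma(\mathbb{Z}_q) \le 2$ together with $n \le \tfrac{2m}{3} + O(1)$ to absorb the $\sigma(A)$ term, yielding the bound $7mq + 8n + 14q$.
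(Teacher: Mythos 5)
Your high-level outline (per-edge gadgets in a large clique minor, greedy edge-by-edge embedding, weight correction by re-routing inside the gadgets) does match the paper's strategy in spirit, but the step you yourself flag as needing "the most care" is precisely the mathematical content of the proof, and as written it contains a genuine gap. You assume that for each edge of $H$ one can build, from roughly $7|A|$ branch sets, a gadget whose internal re-routings realise ``every coset of the relevant subgroup,'' and that choosing among these re-routings brings the subdivision path to weight exactly $0\in A$. Neither half is automatic. First, the weighting may be degenerate: if, say, all edge weights are $0$, or all cycle weights lie in a proper subgroup $B<A$, then no gadget of any size can produce shifts outside the subgroup generated by the available cycle-weight differences, so a per-edge gadget realising all of $A$ need not exist. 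Second, even granted a gadget realising shifts in a subgroup $B$, you must show that the \emph{base} weight of the routed subdivision path already lies in $B$ --- otherwise the corrections cannot reach $0$. The paper resolves both issues with a dichotomy (Proposition~\ref{prop:connectors}): in a $B$-restricted reduced minor one either finds a $B$-connector meeting at most $7|B|$ supernodes, or one deletes at most $7|B|$ supernodes and passes to a $B'$-restricted subminor for a proper subgroup $B'<B$. The subgroup descent can happen at most $\lceil\log_2|A|\rceil$ times and costs at most $\sum_{j\ge 0}7\cdot 2^{-j}|A|=14|A|$ supernodes in total --- this, not a generic ``connecting overhead,'' is the origin of the $+14|A|$ term. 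The $B$-restrictedness property (every permissible cycle has weight in $B$, and $2w(e)\in B$ for every edge) is exactly what later certifies that each routed path closes up into a permissible cycle of weight in $B$, so that switches in a $B$-connector suffice.

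Your account of the $4n\sigma(A)$ term is also not the mechanism that works. In the paper these supernodes are grouped into $n\sigma(A)$ clusters of four; in each cluster a central candidate vertex is marked, the candidates are coloured by elements of the quotient $\{a\in A: 2a\in B_m\}/B_m$, whose order is at most $\sigma(A)$, and pigeonhole yields $n$ candidates in a single colour class. For these chosen branch vertices the connecting path between any two of their supernodes has weight in $B_m$, which is the property needed so that every subdivision path's base weight can be corrected to $0$ by its connector. Attributing the factor $\sigma(A)$ to ``alternatives per incident path at a degree-$\le 3$ branch vertex'' reproduces the right constant but not the right argument, and without the colouring step (and without the $B$-restricted structure behind it) the simultaneous correction of all $m$ paths is not justified.
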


We can deduce Theorem~\ref{thm:mainresult} directly from Theorem~\ref{thm:groupresult} as follows: given a $K_f$-minor $G$ with $f \ge 7mq+8n+14q$, let $A=(\mathbb{Z}_q,+)$ and assign weight $1 \in A$ to each edge of $G$. We can now apply Theorem~\ref{thm:groupresult}, observing that $f \ge 7m |A| + 4n \sigma(A) + 14 |A|$. In this labelling of $G$, a $\mathbb{Z}_q$-divisible $H$-subdivision is precisely a $q$-divisible $H$-subdivision, and hence we recover the conclusion of Theorem~\ref{thm:mainresult}.

\medskip

\paragraph*{\textbf{Notation and Organisation.}} We use standard notation throughout, but highlight some key terminology here to avoid any confusion.

Given a graph $G$ and a finite abelian group $(A,+)$, an \emph{$A$-weighting} of $G$ is defined to be an assignment $w:E(G) \rightarrow A$ of elements from $A$ to the edges of $G$. Given an $A$-weighting $w$ of a graph $G$ and a subgraph $H$ of $G$, we denote by $w(H)=\sum_{e \in E(H)}{w(e)}$ the total weight of $H$ in $G$, where the summation is in the abelian group. An $A$-weighted $K_f$-minor for some $f \ge 1$ is simply a $K_f$-minor equipped with an $A$-weighting (which we, if not defined otherwise, always denote by $w$). 

If $G$ is an $A$-weighted graph for some abelian group $(A,+)$, we say that a subdivision of $H$ contained as a subgraph in $G$ is an \emph{$A$-divisible subdivision} if the sum of all edge weights along any subdivision path in the $H$-subdivision equals $0$. 

Given an abelian group $(A,+)$ and subsets $A_1, A_2, \ldots, A_k \subseteq A$, we denote by $A_1+\cdots+A_k=\{a_1+\cdots+a_k : a_i \in A_i, i \in [k] \} \subseteq A$ the sumset of $A_1,\ldots,A_k$. The sum of an empty list of subsets of $A$ is defined to be equal to $\{0\}$. Given a subset $S \subseteq A$, we denote by $\langle S \rangle$ the set of elements in the subgroup generated by the elements in $S$. 

\medskip

The remainder of this paper is laid out as follows. In the following section, we present some preliminary definitions and results that will be used in our proof. In Section~\ref{sec:proof}, we prove our main result, Theorem~\ref{thm:groupresult}. Finally, in Section~\ref{sec:conclusion}, we provide some concluding remarks and open questions.

\section{Preliminaries}
In this section we shall introduce the notion of connectors, which play a pivotal role in our proof. We shall define them and prove some initial results, leading up to Proposition~\ref{prop:connectors}, which concerns the existence of connectors in $A$-weighted $K_f$ minors (in what follows, $A$ is an arbitrary finite abelian group and $f \ge 4$ is an integer).

\subsection{Reduced minors and connectors}

Since our main result is about finding certain subgraphs in $A$-weighted complete minors, it will be helpful for us to restrict our attention to minors which are in some sense minimal. This property is captured in the following definition.
\begin{definition}

Let $G$ be an $A$-weighted $K_f$-minor. We say that $G$ is a \emph{reduced minor} if the following hold:
\begin{itemize}
    \item Each supernode induces a tree in $G$;
    \item Every leaf in each tree induced by a supernode is adjacent to a vertex in another supernode;
    \item There is exactly one edge between any two supernodes;
    \item $\delta(G) \ge 3$.
\end{itemize}
\end{definition}

\begin{remark}\label{rem:minimal}
Let $G$ be an $A$-weighted $K_f$-minor, where $f\geq 4$. We can get a reduced minor $G'$ from $G$ in the following natural way.
We start by, for each supernode $N$, replacing $G[N]$ by one of its spanning trees. Next, between every pair of supernodes, we remove excess edges until exactly one connecting edge remains. We continue by, for every supernode $N$, successively removing from the tree $G[N]$ leaves that do not have any neighbours outside $N$. Notice that after each of those operations, our graph is a $K_f$-minor with minimum degree at least $2$.

Finally, for every vertex $v$ with only two neighbours $u_1$ and $u_2$, we delete $v$ and introduce the edge $\{u_1, u_2\}$ with weight $w(\{u_1, u_2\}) = w(\{u_1,v\}) + w(\{v,u_2\})$. Observe that $u_1$ and $u_2$ cannot previously have been adjacent, since if they were, then $v, u_1$ and $u_2$ would form a triangle. However, since $f \ge 4$, $v$ is of too low degree to be its own supernode, and therefore must be in the same supernode as at least one of $u_1$ and $u_2$. Then, if $u_1$ and $u_2$ are in the same supernode $N$, we would have a cycle in $G[N]$, while otherwise there would be two edges between their supernodes. 

Hence, after completing these steps, we obtain an $A$-weighted $K_f$-minor $G'$, which we say is a \emph{reduced graph} of $G$. Crucially, note that every path in $G'$ is obtained as a contraction of a path in $G$ with the same endpoints and the same weight.

The reduction of $A$-weighted clique minors described above naturally gives rise to a containment relation as follows: Given two numbers $f_1, f_2 \in \mathbb{N}$ such that $f_1 \le f_2$, an $A$-weighted $K_{f_1}$-minor $G_1$ and an $A$-weighted $K_{f_2}$-minor $G_2$, we say that $G_1$ is a reduced sub-minor of $G_2$, in symbols, $G_1 \preceq_A G_2$, if $G_1$ is a reduced minor obtained from $G_2$ by first deleting all vertices in a subset of its supernodes, and then applying reduction operations as described above to the remaining complete minor.
\end{remark}

Pause to note that $\preceq_A$ forms a transitive relation on the set of $A$-weighted clique minors, and, just as above, if $G_1 \preceq_A G_2$ for two $A$-weighted clique-minors, then every path in $G_1$ corresponds to a contraction of a path in $G_2$ with the same endpoints and the same weight. In particular, if $G_1$ contains an $A$-divisible subdivision of a graph $H$, then $G_2$ contains such a subdivision as well, which even uses the same set of branch vertices.

When working with a reduced $K_f$-minor, it is often convenient to view the graph at the level of its supernodes. However, when we then try to embed a subdivision $H'$ of a cubic graph $H$, we need to identify individual vertices within the supernodes to act as the branch vertices of $H'$. The following proposition allows us to do so.

\begin{proposition}\label{lem:central}
Let $T$ be a tree, and let $v_1$, $v_2$ and $v_3$ be (not necessarily distinct) vertices in $T$. Then there exists a vertex $\bar{v}$ that is connected by internally vertex-disjoint paths (possibly of length zero) to $v_1$, $v_2$ and $v_3$. We call $\bar{v}$ the central vertex in $T$ with respect to $v_1$, $v_2$ and $v_3$.
\end{proposition}

\begin{proof}
Let $P$ be the unique path between $v_1$ and $v_2$. If $v_3$ is on $P$, then we set $\bar{v}=v_3$. If not, consider the path from $v_3$ to $v_2$, and let $\bar{v}$ be the first vertex on this path that lies on $P$.
\end{proof}

We conclude this subsection by introducing connectors, which are central to our proof. Roughly speaking, these are subgraphs of weighted graphs that contain many paths of different weights between a specified pair of vertices.

\begin{definition}\label{def:connectors}
Given an $A$-weighted graph $G$, a \emph{connector} is a subgraph of $G$ consisting of the union of disjoint cycles $C_1,\ldots,C_\ell$ together with paths $P_0,P_1,\ldots, P_{\ell}$ such that all paths are mutually disjoint and internally vertex disjoint from the cycles, and such that the last vertex of $P_i$ is in $C_{i+1}$ for all $0\leq i\leq \ell-1$, and the first vertex of $P_i$ is in $C_i$ for all $i\in[\ell]$. For all $i \in [\ell]$, let $x_i$ and $y_i$ be the weights of the two paths contained in $C_i$ that connect $P_{i-1}$ to $P_i$.

For a subset $S\subseteq A$ we say that a connector is an \emph{$S$-connector} if $S \subseteq \sum_{i\in [\ell]}\{0,x_i-y_i\}$. We will refer to the first vertex of $P_0$ and the last vertex of $P_\ell$ as the first and last vertex of the connector, respectively (see Figure \ref{fig:connector}), and we call them the \emph{endpoints} of the connector. 
\end{definition}

\begin{figure}[ht]
    \centering
    \begin{tikzpicture}[scale=1.2,main node/.style={circle,draw,color=black,fill=black,inner sep=0pt,minimum width=3pt}]
        
	    \node[main node] (a) at (1,0) [label=below:$u$]{};
	    \node[main node] (a1) at (2,0){};
	    \node[main node] (a2) at (3.5,0){};
	    \node[main node] (a3) at (4.5,0){};
	    \node[main node] (a4) at (6,0){};
	    \node[main node] (a5) at (7,0){};

        \node[] (P0) at(1.5,0.2) {$P_0$};
        \node[] (P0) at(2.75,0) {$C_1$};
        \node[] (P0) at(2.75,0.9) {$x_1$};
        \node[] (P0) at(2.75,-0.9) {$y_1$};
        
        \node[] (P0) at(4,0.2) {$P_1$};
        \node[] (P0) at(5.25,0) {$C_2$};
        \node[] (P0) at(5.25,0.9) {$x_2$};
        \node[] (P0) at(5.25,-0.9) {$y_2$};
        
        \node[] (P0) at(6.5,0.2) {$P_2$};
         
        \node[] (P0) at(10,0.2) {$P_{\ell-1}$};
        \node[] (P0) at(11.25,0) {$C_\ell$};
        \node[] (P0) at(11.25,0.9) {$x_\ell$};
        \node[] (P0) at(11.25,-0.9) {$y_\ell$};
        
        \node[] (P0) at(12.5,0.2) {$P_\ell$};

        \draw[thick] (a)--(a1);
        \draw[thick](2.75,0) circle (0.75);
          \draw[thick] (a2)--(a3);
          \draw[thick](5.25,0) circle (0.75);
          \draw[thick] (a4)--(a5);
	    \draw[loosely dotted,thick] (7.5,0)--(9,0);
	    
	    \node[main node] (a) at (9.5,0) {};
	    \node[main node] (a1) at (10.5,0){};
	    \node[main node] (a2) at (12,0){};
	    \node[main node] (a3) at (13,0)[label=below:$v$]{};
	     \draw[thick] (a)--(a1);
        \draw[thick](11.25,0) circle (0.75);
          \draw[thick] (a2)--(a3);
	    

    \end{tikzpicture}
    \caption{Illustration of a connector with endpoints $u$ and $v$. For each $i\in [\ell]$, the labels $x_i$ and $y_i$ represent the weights of the two paths in $C_i$ which connect $P_{i-1}$ to $P_i$.}
    \label{fig:connector}
\end{figure}
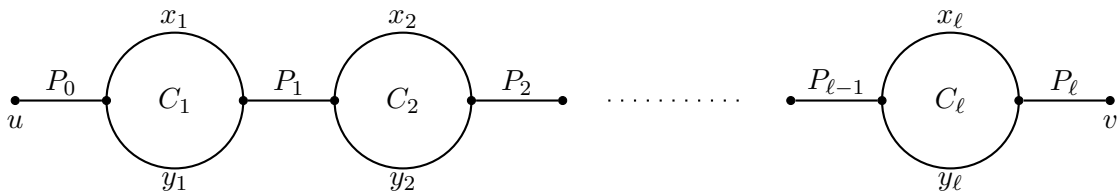

The \emph{base path} of a connector is the path $P$ between the endpoints such that, for each $i$, $P\cap C_i$ is the path of weight $y_i$. By \emph{switching} the path used in $C_i$ to the one which is of weight $x_i$, observe that the weight of $P$ changes by $x_i-y_i$. Hence, by definition of a $S$-connector, by doing the appropriate switches, for every $s\in S$ we can get a path $Q$ of weight $w(Q)=w(P)+s$ between the endpoints of the connector.

\subsection{Permissible cycles and $B$-restricted minors}
The main goal of this section is to prove Proposition~\ref{prop:connectors}, which concerns the existence of connectors and plays an important role in the proof of Theorem~\ref{thm:groupresult}. In order to lay the groundwork for this result, we next define special types of paths and cycles in complete minors. 
\begin{definition}
Let $G$ be a reduced $K_f$-minor.
\begin{itemize}
    \item We say that a path $P$ in $G$ is \emph{permissible} if for every supernode $N$ we have that $G[N]\cap P$ is either empty or a path.
    \item We say that a cycle $C$ in $G$ is \emph{permissible} if for every supernode $N$ we have that $G[N]\cap C$ is either empty or a path, except for at most one supernode $N_1$ for which $G[N_1]\cap C$ can be the union of two vertex-disjoint paths.
\end{itemize}
\begin{remark}\label{rem:permissible}
Notice that if a permissible cycle $C$ has a supernode $N_1$ for which $G[N_1]\cap C$ is the union of two disjoint paths $P_1$ and $P_2$, then $C$ is incident to at least five supernodes. Indeed, observe that the four edges in $C$ which are incident to the endpoints of $P_1$ and $P_2$, and which are not in $N_1$, have their other endpoint in mutually distinct supernodes. This is because $G$ is a reduced minor, so there is exactly one edge between two supernodes. Hence, $C$ meets at least four other supernodes besides $N_1$.
\end{remark}
\end{definition}
We will also need the following definition before stating the results of this subsection.

\begin{definition}
Let $G$ be an $A$-weighted $K_f$-minor and let $B\leq A$ be a subgroup. Then we say that $G$ is \emph{$B$-restricted} if the following hold:
\begin{itemize}
    \item Every permissible cycle $C$ in $G$ has weight $w(C)\in B$.
    \item Every edge $e$ has weight $w(e)$ such that $2w(e)\in B$.
\end{itemize}
\end{definition}

\noindent Note that every $A$-weighted $K_f$-minor is trivially $A$-restricted, and so this definition is only of interest when $B$ is a proper subgroup of $A$. The following lemma states that in order to show a weighted minor is $B$-restricted, it suffices to check permissible cycles incident to few supernodes.

\begin{lemma}\label{lem:even paths}
Let $A$ be an abelian group, let $f\geq 5$ and let $B\leq A$. Let $G$ be a reduced $A$-weighted $K_f$-minor such that every permissible cycle $C$ whose vertices are contained in at most five supernodes satisfies $w(C)\in B$. Then $G$ is $B$-restricted.
\end{lemma}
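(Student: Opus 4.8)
The plan is to prove the contrapositive in spirit: assuming every permissible cycle meeting at most five supernodes has weight in $B$, and given an arbitrary permissible cycle $C$ (meeting possibly many supernodes) or an arbitrary edge $e$, deduce that $w(C) \in B$ and $2w(e) \in B$ respectively. The key idea is a ``shortcutting'' argument: since $G$ is a reduced $K_f$-minor, between any two supernodes there is exactly one edge, and so the complete minor structure gives us a wealth of short detours through unused supernodes that we can splice into $C$ to chop it up into small permissible cycles whose weights we control.

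First I would handle the edge condition, which is really the base case. Given an edge $e = \{u, v\}$, say with $u$ in supernode $N_u$ and $v$ in $N_v$ (possibly $N_u = N_v$), I want to exhibit a short permissible cycle $C_e$ of weight $2w(e)$ or $0$. If I can find a path $P$ from $u$ to $v$ of weight $-w(e)$ using a bounded number of supernodes and vertex-disjoint from $e$ internally, then $C_e = P \cup e$ is a permissible cycle of weight $0 \in B$, which does not immediately help. Instead, the natural move is: find \emph{two} internally disjoint $u$–$v$ paths $P_1, P_2$ through few supernodes; then $P_1 \cup P_2$, $P_1 \cup e$, $P_2 \cup e$ are three permissible cycles meeting at most five supernodes, so all three weights lie in $B$; adding the last two and subtracting the first yields $2w(e) \in B$. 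Since $f \ge 5$ and only two supernodes are ``used up'' by $e$, there are at least three free supernodes, and the edges of the complete minor among them plus the edges back to $N_u, N_v$ give the two short paths $P_1, P_2$ (each routed through one extra supernode, so each cycle meets at most five supernodes). I would need to be slightly careful that $P_1 \cup e$ etc.\ are genuinely \emph{permissible} — i.e.\ that within each supernode the intersection is a path, not a disjoint union of two paths except possibly in one supernode — but choosing the paths to enter and leave each intermediate supernode at a single subpath, and using that $G[N]$ is a tree, makes this routine; Remark~\ref{rem:permissible} is the relevant bookkeeping.

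Next, for a general permissible cycle $C$, I would induct on the number $k$ of supernodes it meets. If $k \le 5$ we are done by hypothesis. If $k \ge 6$, pick two supernodes $N, N'$ that $C$ passes through which are ``far apart'' along $C$, so that $C$ is split into two arcs $C = A_1 \cup A_2$ from $N$ to $N'$, each arc meeting a strictly smaller number of supernodes, and with neither arc forced to carry the special double-path supernode alone in a bad way. Using a bounded-length permissible path $R$ through a free supernode connecting $N$ to $N'$ (which exists since $C$ cannot meet all $f \ge 6$ supernodes if... — actually one must argue there is a free supernode, which holds because a permissible cycle on $k$ supernodes has bounded size but $f$ can be large; if $C$ is spanning, instead reroute using an edge of the complete minor between $N$ and $N'$ directly, or between vertices on $C$ in non-consecutive supernodes), form $C_1 = A_1 \cup R$ and $C_2 = A_2 \cup R$. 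These are permissible cycles each meeting fewer supernodes than $C$, so by induction $w(C_1), w(C_2) \in B$; since $w(C_1) + w(C_2) = w(C) + 2w(R)$ and $2w(R) \in B$ by the edge condition applied along $R$ (each edge of $R$ has $2w(e) \in B$, and $R$ has boundedly many edges), we conclude $w(C) \in B$.

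The main obstacle I expect is \textbf{maintaining permissibility under splicing}. When I cut $C$ at a supernode $N$ and reconnect with a detour $R$, the supernode $N$ now contains the endpoint of an arc of $C$ together with the endpoint of $R$; I must ensure $C_i \cap G[N]$ is a single path (or is the one allowed double-path supernode). This is a genuine case analysis: it depends on where along $C$'s trace in $N$ I cut, and whether $N$ was already the exceptional double-path supernode of $C$. The fix is to choose the cut supernodes to \emph{avoid} the exceptional one and to cut at an endpoint of the trace-path inside the supernode, so that the new connection only extends that path rather than branching it; the tree structure of $G[N]$ guarantees a path from that endpoint to the attachment vertex of $R$ staying inside $N$. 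The secondary nuisance is guaranteeing a free (or otherwise usable) supernode for the detour when $C$ is large; handling the case where $C$ is vertex-disjoint from some supernode versus the case where $C$ touches all supernodes (then use a chord edge of the complete minor between two non-consecutive supernodes of $C$ directly, with no detour needed) cleanly disposes of this.
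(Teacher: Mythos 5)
Your overall strategy is the same as the paper's: first derive $2w(e)\in B$ for every edge from three small permissible cycles via the identity $w(P_1\cup e)+w(P_2\cup e)-w(P_1\cup P_2)=2w(e)$, then handle an arbitrary permissible cycle $C$ by minimality/induction on the number of supernodes met, splicing in a connecting path between two trace-paths of $C$ that are far apart in the cyclic order (avoiding the exceptional supernode) and using $w(C)=w(C_1)+w(C_2)-2w(P)$ together with the edge condition applied along $P$. Your second step is essentially the paper's argument; note only that no free supernode or ``spanning'' case analysis is needed there, since one can take the shortest path between the two traces inside the tree $G[N_1\cup N_2]$, which is automatically internally disjoint from $C$ and keeps all the permissibility checks local.

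The genuine gap is in the edge step. You produce the two internally disjoint $u$--$v$ paths by ``routing through free supernodes and the edges of the complete minor back to $N_u,N_v$'', but the attachment vertices of those minor edges inside the trees $G[N_u]$ and $G[N_v]$ are not under your control: the unique tree-paths from $u$ to the two chosen exit vertices of $N_u$ (and from $v$ to those of $N_v$) may coincide beyond $u$ --- for instance, all edges leaving $N_u$ towards your chosen free supernodes may attach in the same branch of $G[N_u]$ at $u$, or even at a single vertex. Then $P_1$ and $P_2$ are not internally disjoint, $P_1\cup P_2$ is not a cycle, and the algebraic identity collapses. Internal disjointness at the two endpoints is exactly where the reduced-minor hypotheses $\delta(G)\ge 3$ and ``every leaf of a supernode-tree has a neighbour in another supernode'' must be used, and your sketch never invokes them: the paper starts from two further neighbours $x_1,x_2$ of $u$ (and $x_3,x_4$ of $v$), walks inside the maximal subtrees avoiding $u,v$ down to leaves, exits the supernode there, and only then connects the resulting exit supernodes pairwise; in the case $N_u\neq N_v$ it similarly builds the two escapes from $u$ and from $v$ before choosing the intermediate supernodes. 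This construction also handles a second point you gloss over in the case $N_u=N_v$: the exit supernodes of the two paths must be suitably distinct, since otherwise the cycle $P_1\cup P_2$ would meet two supernodes in two disjoint paths each and would not be permissible. So the claim you need is true, but the mechanism you propose for it would fail as stated; the degree-$3$ and leaf conditions are the missing ingredients, not optional bookkeeping.
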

\begin{proof}
Suppose $B\neq A$ as otherwise the claim holds trivially. Let us first show that $2w(e)\in B$ for every edge $e$ in $G$. Denote by $u,v$ the endpoints of an edge $e$, and let $N_1$ and $N_2$ be the supernodes such that $u \in N_1$ and $v \in N_2$. 

First consider the case that $N_1=N_2$. Since $u$ and $v$ are of degree at least three (since $G$ is a reduced minor), let $x_1,x_2$ be two of the other neighbours of $u$, and $x_3,x_4$ two other neighbours of $v$. For each $i\in[4]$ do the following. If $x_i$ is already in another supernode $X_i\neq N_1$, let $P_i$ be the path consisting of one vertex $x_i$, and let $s_i=x_i$.
Otherwise, let $t_i$ be a leaf in the maximal subtree of $G[N_1]$ which contains $x_i$, but which does not contain $u$ and $v$, and let $s_i$ be a neighbor of $t_i$ in another supernode $X_i$ (by the definition of a reduced minor, every leaf has such a neighbour). Now, let $P_i$ be the path obtained by concatenating the unique path between $x_i$ and $t_i$ in $G[N_1]$ with the edge $\{t_i,s_i\}$. Next, let $Q_1$ be the unique path between $s_1$ and $s_3$ in the tree $G[X_1\cup X_3]$, and similarly $Q_2$ the path between $s_2$ and $s_4$ in $G[X_2\cup X_4]$ (see Figure \ref{fig:edgeweight}). Note that by construction we have that the cycle $C_1$ formed by the paths $P_1-Q_1-P_3- x_3-v-u-x_1$ is a permissible cycle incident with three supernodes, as is the cycle $C_2$ consisting of $P_2-Q_2-P_4-x_4-v-u-x_2$, so we have that $w(C_1),w(C_2)\in B$. Also note that the cycle $C$ with edge-set $(E(C_1)\cup E(C_2))\setminus\{e\}$ is permissible and incident to five supernodes, hence we conclude $2w(e)=w(C_1)+w(C_2)-w(C)\in B$.
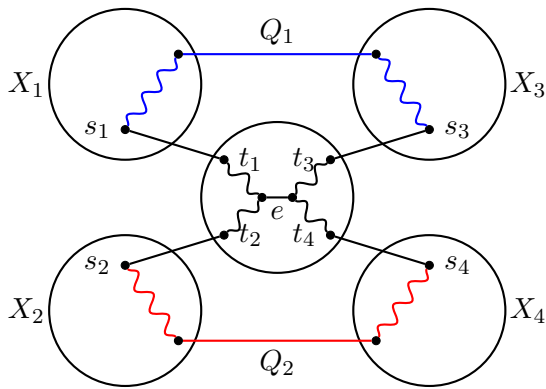
\begin{figure}[ht]
    \centering
    \begin{tikzpicture}[scale=1,main node/.style={circle,draw,color=black,fill=black,inner sep=0pt,minimum width=3pt}]


	    \draw[thick](0,0) circle (1);
	        \node[main node] (a) at (-0.2,0) {};
	        \node[main node] (b) at (0.2,0) {};
	        \draw[] [thick] (a) -- (b) ;
	        \node[main node] (x1) at (-0.7,0.5)[label=right:$t_1$]{};
	        \node[main node] (x3) at (0.7,0.5)[label=left:$t_3$]{};
	        \node[main node] (x2) at (-0.7,-0.5)[label=right:$t_2$]{};
	        \node[main node] (x4) at (0.7,-0.5)[label=left:$t_4$]{};
	        \path [draw=black,snake it, thick] (a) -- (x1);
	        \path [draw=black,snake it, thick] (a) -- (x2);
	        \path [draw=black,snake it, thick] (b) -- (x3);
	        \path [draw=black, snake it, thick] (b) -- (x4);
	    \draw[thick](2,1.5) circle (1);
	    \node[main node] (a3) at (2,0.9) [label=right:$s_3$]{};
	    \node[main node] (b3) at (1.3,1.9){};
	    \path [draw=blue, snake it, thick] (a3) -- (b3);
	    \draw[] [thick] (x3) -- (a3) ;

        \draw[thick](-2,1.5) circle (1);
        \node[main node] (a1) at (-2,0.9)[label=left:$s_1$]{};
	    \node[main node] (b1) at (-1.3,1.9){};
	    \path [draw=blue, snake it, thick] (a1) -- (b1);
	    \draw[] [thick] (x1) -- (a1) ;
         
        \draw[blue,thick] (b1)--(b3);
        
        \draw[thick](2,-1.5) circle (1);
        \node[main node] (a4) at (2,-0.9)[label=right:$s_4$]{};
	    \node[main node] (b4) at (1.3,-1.9){};
	    \path [draw=red, snake it, thick] (a4) -- (b4);
	    \draw[] [thick] (x4) -- (a4) ;
        
        \draw[thick](-2,-1.5) circle (1);
        \node[main node] (a2) at (-2,-0.9)[label=left:$s_2$]{};{};
	    \node[main node] (b2) at (-1.3,-1.9){};
	    \path [draw=red, snake it, thick] (a2) -- (b2);
	    \draw[] [thick] (x2) -- (a2) ;
	    
	    \draw[red,thick] (b2)--(b4);

   \node(a) at (0,2.2) {$Q_1$};
   \node(a) at (0,-2.2) {$Q_2$};
      \node(a) at (0,-0.2) {$e$};

   \node(a) at (-3.3,1.5) {$X_1$};
   \node(a) at (3.3,1.5) {$X_3$};
    \node(a) at (-3.3,-1.5) {$X_2$};
     \node(a) at (3.3,-1.5) {$X_4$};
    \end{tikzpicture}
    \caption{The union of the blue curves depicts the path $Q_1$, and the union of the red curves depicts $Q_2$.}
   \label{fig:edgeweight}
\end{figure}


Next, suppose $N_1\neq N_2$. Let $M$ be a supernode different from $N_1, N_2$ and let $\{n_1,m_1\}$ be the unique edge between $N_1$ and $M$, and let $\{n_2,m_2\}$ be the unique edge between $N_2$ and $M$, where $m_i \in M$ and $n_i \in N_i$. Let $Q_1$ be the path between $m_1$ and $m_2$ in $M$.
 Let $P_1$ be the path in $N_1$ from $u$ to $n_1$, and let $P_2$ be a path (internally vertex-disjoint from $P_1$) in $N_1$ which starts at $u$, goes to a leaf in $G[N_1]$, and finishes at a vertex $s_1$ in another supernode $M_1$ different from $N_1,N_2,M$, such that $V(P_1) \cap M_1=\{s_1\}$.
Analogously, we find a path $P_3$ from $v$ to $n_2$ in $N_2$, and a path $P_4$ from $v$ to a vertex $s_2\in M_2$, for a supernode $M_2$ different from $N_1,N_2,M$, and such that $V(P_4) \cap M_2=\{s_2\}$. Finally, let $Q_2$ be the path in the tree $G[M_1\cup M_2]$ which connects $s_1$ to $s_2$.
Again we have constructed two cycles $C_1$ and $C_2$, consisting of paths $\{v,u\}-P_1-\{n_1,m_1\}-Q_1-\{m_2,n_2\}-P_3$ and $\{v,u\}-P_2-Q_2-P_4$, and as in the case when $N_1=N_2$, we get a permissible cycle $C$ (with edge-set $(E(C_1) \cup E(C_2))\setminus\{e\}$) contained in the union of at most five supernodes. We conclude again that $2w(e)=w(C_1)+w(C_2)-w(C)\in B$.

Now let us show that all permissible cycles $C$ have $w(C)\in B$, and for the sake of contradiction assume the contrary. Let $C$ be a permissible cycle with $w(C) \notin B$ that minimises the number of supernodes it intersects. By assumption, $C$ is incident to at least six supernodes.
Look at the cyclic ordering of those supernodes based on their appearance on $C$ (where at most one supernode appears two times, as shown in Figure \ref{fig:largepermissible}). 

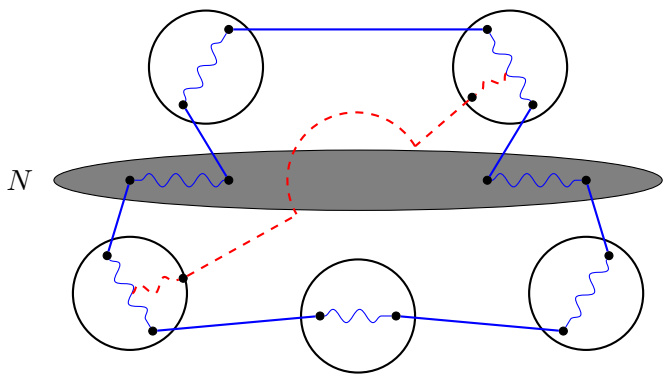
\begin{figure}[ht]
    \centering
    \begin{tikzpicture}[xscale=1,main node/.style={circle,draw,color=black,fill=black,inner sep=0pt,minimum width=3pt}]
        
	    \node(a) at (0,0) [label=left:$N$]{};
	    \node(b) at (8,0){};
	    \fitellipsis{a}{b};

	    \draw[thick](2,1.5) circle (0.75);
	        \node[main node] (a1) at (1.7,1){};
	        \node[main node] (b1) at (2.3,2){};
	        \path [draw=blue,snake it] (a1) -- (b1) ;
	    \draw[thick](6,1.5) circle (0.75);
	        \node[main node] (a2) at (6.3,1){};
	        \node[main node] (b2) at (5.7,2){};
	        \path [draw=blue,snake it] (a2) -- (b2) ;
	    \draw[thick](1,-1.5) circle (0.75);
	        \node[main node] (a6) at (0.7,-1){};
	        \node[main node] (b6) at (1.3,-2){};
	        \path [draw=blue,snake it] (a6) -- (b6) ;
	    \draw[thick](7,-1.5) circle (0.75);
	        \node[main node] (a4) at (7.3,-1){};
	        \node[main node] (b4) at (6.7,-2){};
	        \path [draw=blue,snake it] (a4) -- (b4) ;
	    \draw[thick](4,-1.8) circle (0.75);
	        \node[main node] (a5) at (3.5,-1.8){};
	        \node[main node] (b5) at (4.5,-1.8){};
	        \path [draw=blue,snake it] (a5) -- (b5) ;
	        
	        \node[main node] (a3) at (5.7,0){};
	        \node[main node] (b3) at (7,0){};
	        \path [draw=blue,snake it] (a3) -- (b3) ;
	        \node[main node] (a7) at (2.3,0){};
	        \node[main node] (b7) at (1,0){};
	        \path [draw=blue,snake it] (a7) -- (b7) ;

	         \draw[thick, blue] (b1)--(b2);
            \draw[thick, blue] (a2)--(a3);
            \draw[thick, blue] (b3)--(a4);
            \draw[thick, blue] (b4)--(b5);
            \draw[thick, blue] (a5)--(b6);
            \draw[thick, blue] (a6)--(b7);
            \draw[thick, blue] (a7)--(a1);

       \node[main node] (c) at (1.7,-1.3){};
       \path [draw=red,snake it, thick, dashed] (1.03,-1.5) -- (c) ;
       \path [draw=red, thick,dashed] (c) -- (3.21,-0.45);
       
       \node[main node] (d) at (5.5,1.1){};
       \path [draw=red,snake it, thick,dashed] (5.949,1.42) -- (d) ;
       \path [draw=red, thick,dashed] (d) -- (4.75,0.45);

   \draw[draw=red, thick,dashed] (4.75,0.45) arc (30:210:0.9);
	    
        
        
         
         

	    
	    

    \end{tikzpicture}
    \caption{The circles represent the supernodes that contain a single path in the cycle $C$ for which we assumed that $w(C)\notin B$, while the ellipse represents the supernode $N$ that intersects $C$ in two paths. The curved blue lines are paths in $C$, while straight blue lines are edges in $C$. The dashed red curve represents the path $P$ between two supernodes which splits $C$ into two permissble cycles, each of which is incident to fewer supernodes than $C$.}
   \label{fig:largepermissible}
\end{figure}

Let $N_1$ and $N_2$ be two supernodes that intersect $C$ in exactly one path each, $P_1$ and $P_2$ respectively, such that $N_1$ and $N_2$ are at least three apart in the cyclic ordering of supernodes. Now, let $P$ be the shortest path in $G[N_1\cup N_2]$ between $P_1$ and $P_2$. The union $C \cup P$ then splits into two cycles $C_1$ and $C_2$ whose intersection is the path $P$. But since $C_1$ and $C_2$ are also permissible, and since they are incident to fewer supernodes than $C$, by our minimality assumption on $C$ we have that $w(C_1),w(C_2)\in B$. But this means that $$w(C)=\underbrace{w(C_1)}_{\in B}+\underbrace{w(C_2)}_{\in B}-\underbrace{2w(P)}_{\in B}\in B,$$ since $2w(e)\in B$ for each edge $e$ in the path $P$, as shown in the first part of the proof. This gives the desired contradiction and finishes the proof.
\end{proof}

We continue with a simple lemma which, roughly speaking, will be used to reveal some information about the cycle weights in a graph in which a particular connector cannot be extended by another cycle. It is one ingredient for the proposition which follows after the lemma.
\begin{lemma}\label{lem:subgroup}
Let $S$ and $T$ be subsets of elements of a finite abelian group $A $, where $0\in S$. Suppose that $S+\{t\}\subseteq S$ for all $t\in T$. Then $\langle \{t\mid t\in T\}\rangle\subseteq S$.
\end{lemma}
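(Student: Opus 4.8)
The plan is to show that $S$ already contains every finite sum (with repetitions allowed) of elements of $T$, and then to observe that in a finite abelian group these sums are precisely the elements of the subgroup $\langle \{t\mid t\in T\}\rangle$.

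For the first part, I would prove by induction on $k$ that $S+\{t_1\}+\dots+\{t_k\}\subseteq S$ for every choice of $t_1,\dots,t_k\in T$, the base case $k=0$ being the trivial inclusion $S\subseteq S$. For the inductive step, write the sumset as $(S+\{t_1\}+\dots+\{t_{k-1}\})+\{t_k\}$ and combine the inductive hypothesis with the assumption $S+\{t_k\}\subseteq S$. Since $0\in S$, evaluating each such sumset at the element $0\in S$ yields $t_1+\dots+t_k\in S$; in particular, for $k=0$, this merely re-records $0\in S$. Hence $S$ contains the set $P$ of all elements of $A$ expressible as a finite sum of elements of $T$ with repetition (including the empty sum, which equals $0$).

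It remains to check $P=\langle \{t\mid t\in T\}\rangle$. The inclusion $P\subseteq\langle \{t\mid t\in T\}\rangle$ is immediate. For the reverse inclusion, recall that in an abelian group the subgroup generated by $T$ consists of all integer linear combinations $\sum_i n_i t_i$ with $n_i\in\mathbb{Z}$ and $t_i\in T$. Here I would invoke the finiteness of $A$: each $t\in T$ has finite order $d_t$, so replacing each coefficient $n_i$ by its residue in $\{0,1,\dots,d_{t_i}-1\}$ does not change the value of the combination, rewriting an arbitrary element of $\langle \{t\mid t\in T\}\rangle$ as a finite sum of elements of $T$ with nonnegative coefficients, i.e.\ as an element of $P$. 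Combining the two inclusions gives $\langle \{t\mid t\in T\}\rangle=P\subseteq S$, as desired.

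I do not anticipate a genuine obstacle; the one point that needs care is precisely the use of finiteness of $A$, which is what allows one to dispense with additive inverses and argue only with nonnegative combinations. (Without this hypothesis the statement is false, as witnessed by $A=\mathbb{Z}$, $S=\mathbb{Z}_{\ge 0}$ and $T=\{1\}$.)
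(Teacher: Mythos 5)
Your proof is correct and follows essentially the same route as the paper: an induction on the number of summands showing that every finite sum of elements of $T$ (with repetition) lies in $S$. The only difference is that you make explicit the step the paper leaves implicit---that by finiteness of $A$ the nonnegative combinations already exhaust $\langle \{t \mid t \in T\}\rangle$---which is a worthwhile clarification but not a different argument.
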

\begin{proof}
We want to show that for every integer $k$ and for every sequence $t_1,\ldots,t_k$ of (not necessarily distinct) elements of $T$, we have $\sum_{i=1}^kt_i\in S$. We show this by induction on $k$.

For $k=1$, note that $S$ contains $0$, so by assumption $0+t_1\in S$. For $k \ge 2$, by the induction hypothesis we may assume $\sum_{i=1}^{k-1}t_i\in S$. Then we have $\sum_{i=1}^k t_i=\left(\sum_{i=1}^{k-1}{t_i}\right)+t_k \in S+\{t_k\} \subseteq S$, as desired.
\end{proof}

The following result is one of the main building blocks of our proof. It states that in a $B$-restricted minor we can either find a $B$-connector of small size, or we can delete a small number of supernodes and be left with a $B'$-restricted minor for a proper subgroup $B'<B$.
\begin{proposition}\label{prop:connectors}
Let $A$ be a finite abelian group, $B \le A$ a subgroup, and let $G$ be a reduced $B$-restricted $A$-weighted $K_f$-minor. Then at least one of the following two claims holds:
\begin{itemize}
    \item $G$ contains a $B$-connector $F$ which intersects at most $7|B|$ supernodes, such that for each endpoint $v$ of $F$ and the supernode $N$ containing $v$ we have $V(F)\cap N=\{v\}$. Furthermore, the base path of $F$ is permissible.
    \item There is a proper subgroup $B'<B$ and a reduced subminor $G'\preceq_A G$ such that $G'$ is a $B'$-restricted $A$-weighted reduced $K_{f'}$-minor, where $f'\geq f-7|B|$.
\end{itemize}
\end{proposition}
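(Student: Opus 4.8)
The plan is to construct the required $B$-connector greedily, one cycle at a time, and to show that if the construction ever gets stuck before a full $B$-connector is produced, then deleting the few supernodes used so far leaves a $B'$-restricted sub-minor for a proper subgroup $B'<B$ -- namely the subgroup generated by the ``arc-differences'' of the permissible cycles that turned out to be useless. (We assume throughout that $f$ is large enough for the routing arguments below; this is the regime relevant to the application, where $f$ far exceeds $7|B|$.)

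At each stage I keep a connector $F$ with cycles $C_1,\dots,C_\ell$ and paths $P_0,\dots,P_\ell$, maintained under a \emph{fresh-supernode discipline}: every cycle and every path -- apart from the endpoints of the paths, which lie on cycles -- is placed on supernodes not used by any earlier piece of $F$. This automatically makes the cycles permissible, keeps them disjoint and internally disjoint from the paths, keeps the base path permissible (each supernode is met by at most one piece of $F$), and bounds the number of supernodes met by $F$ by $7\ell+7$. I also arrange that each endpoint of $F$ is the unique vertex of $F$ in its supernode, and I track the offset set $S:=\sum_{i\in[\ell]}\{0,x_i-y_i\}$. Since $G$ is $B$-restricted, doubling any edge weight lies in $B$, hence so does doubling any path weight, and therefore $x_i-y_i=2x_i-w(C_i)\in B$ (as $w(C_i)\in B$); thus $S\subseteq B$ at all times. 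If at some point $B\subseteq S$, I stop and output $F$; as $\ell\le|B|-1$ (shown next), this uses at most $7(|B|-1)+7=7|B|$ supernodes, the base path is permissible, and the endpoints satisfy the required condition. Otherwise I attempt to extend: I search for a permissible cycle $C$ on at most five supernodes, disjoint from $V(F)$, reachable from the current endpoint of $F$ by a short path through fresh supernodes -- which, using that a complete minor has an edge between every pair of supernodes and that reduced minors have minimum degree at least three, is possible for \emph{every} such $C$ when $f$ is large enough -- admitting two attachment vertices whose $C$-arcs have weights $x,y$ with $S+(x-y)\ne S$. Appending $C$ there gives a new connector with offset set $S+\{0,x-y\}\supsetneq S$; since $S$ is a strictly increasing subset of $B$, this can occur at most $|B|-1$ times, so the process terminates. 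The constant $7$ is exactly what pays for one cycle (five supernodes) together with a connecting path and a fresh endpoint supernode.

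Now suppose the process is stuck: $B\not\subseteq S$, yet no extending cycle as above exists. Let $T\subseteq A$ be the set of all arc-differences $x-y$ of permissible cycles on at most five supernodes in $G$ avoiding $V(F)$, over all choices of two attachment vertices. Being stuck means precisely that $S+\{0,t\}=S$, hence $S+\{t\}\subseteq S$, for every $t\in T$, so Lemma~\ref{lem:subgroup} (with $0\in S$) yields $\langle T\rangle\subseteq S$. Set $B':=\langle T\rangle$; then $B'\subseteq S\subseteq B$, and $B'\ne B$ because $B\not\subseteq S$, so $B'$ is a proper subgroup of $B$. Let $G'$ be the reduced sub-minor obtained from $G$ by deleting all supernodes met by $F$ and re-reducing; then $G'\preceq_A G$ is a reduced $K_{f'}$-minor with $f'\ge f-7|B|$. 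By Lemma~\ref{lem:even paths}, to conclude that $G'$ is $B'$-restricted it suffices to verify $w(C)\in B'$ for every permissible cycle $C$ of $G'$ on at most five supernodes. Such a $C$ lifts to a permissible cycle of $G$ of the same weight on the same supernodes, avoiding $V(F)$; pick any vertex $v$ on it with $C$-neighbours $a$ and $b$. Attaching $F$ to $C$ at the vertex pairs $\{v,a\}$, $\{v,b\}$ and $\{a,b\}$ -- where in the last case one arc is the path $a\,v\,b$ of weight $w(\{a,v\})+w(\{v,b\})$ -- exhibits $2w(\{v,a\})-w(C)$, $2w(\{v,b\})-w(C)$ and $2(w(\{a,v\})+w(\{v,b\}))-w(C)$ as elements of $T$; adding the first two and subtracting the third gives $-w(C)$, so $w(C)\in\langle T\rangle=B'$. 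This establishes the second alternative.

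The crux is the stuck case, and within it the device of attaching the current connector to one small permissible cycle $C$ at three judiciously chosen pairs of attachment vertices, which produces just enough relations to force $w(C)$ into $\langle T\rangle$. The remaining work -- routing each connecting path through only a bounded number of fresh supernodes so that $F$ meets at most $7|B|$ supernodes, which relies on the edge-between-every-pair-of-supernodes property of complete minors and on the minimum-degree-three property of reduced minors (so that one may enter a supernode at a prescribed vertex), and checking that the fresh-supernode discipline preserves permissibility of the cycles and of the base path as well as the endpoint condition -- is routine but must be carried out with care.
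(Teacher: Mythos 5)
Your overall strategy is the same as the paper's (grow an $S$-connector cycle by cycle, use Lemma~\ref{lem:subgroup} when stuck, then Lemma~\ref{lem:even paths} to get a $B'$-restricted subminor), but the decisive step of the stuck case has a genuine gap. Your derivation of $w(C)\in\langle T\rangle$ hinges on the claim that the connector can be attached to a small permissible cycle $C$ at an \emph{arbitrarily prescribed} pair of vertices --- specifically at the pairs $\{v,a\}$, $\{v,b\}$, $\{a,b\}$ for a chosen vertex $v$ with cycle-neighbours $a,b$ --- so that these three arc-differences genuinely lie in the set $T$ of differences realisable by extensions (only then does ``stuck'' imply $S+\{0,t\}=S$ for them). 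This attachability is not justified and is false in general: in a reduced minor there is exactly one edge between any two supernodes, so a path entering the supernode $N$ containing $v$ arrives at the unique endpoint of that inter-supernode edge and must then walk inside the tree $G[N]$, where it may be forced to meet $C\cap N$ before reaching $v$; moreover $v$ may have all of its (at least three) neighbours on $C$ itself (two cycle-neighbours plus a chord), in which case no path internally disjoint from $C$ can end at $v$ at all. Minimum degree three does \emph{not} let you ``enter a supernode at a prescribed vertex'', contrary to what you assert. Under your own fresh-supernode discipline the situation is even more rigid, since the last edge of the incoming path must be the unique edge from a fresh supernode into $N$, whose endpoint in $N$ need not be $v$. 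Consequently your identity $t_1+t_2-t_3=-w(C)$ is built from elements that need not belong to $T$, and the conclusion $w(C)\in B'$ does not follow.

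The paper's proof avoids exactly this trap: it does not prescribe the attachment points but takes them where they can be had --- for three supernodes $T_1,T_2,T_3$ met by $C$ and three further supernodes $N_1,N_2,N_3$, it routes shortest paths $Q_j$ from $N_j$ into $C\cap T_j$, and the three resulting hitting points split $C$ into arcs of weights $x_1,x_2,x_3$ whose pairwise attachment differences $\delta_1,\delta_2,\delta_3$ are all realisable and satisfy $\delta_1+\delta_2+\delta_3=w(C)$; this is what legitimately forces $w(C)\in B'$ in the stuck case. Repairing your argument essentially amounts to adopting this construction. (A smaller point: the proposition is for all $f$, and the case $f\le 7|B|$ should be dispatched by taking $G'$ trivial rather than assumed away; your remaining bookkeeping --- $S\subseteq B$, the bound $\ell\le|B|-1$, the count of $7$ supernodes per step, and the application of Lemmas~\ref{lem:subgroup} and~\ref{lem:even paths} --- matches the paper and is fine.)
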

\begin{proof}
Note that we may assume $f>7|B|$, as otherwise the second claim is trivially true by letting $G'$ be a trivial subgraph of $G$. We will attempt to construct the connector by finding a sequence of subsets $S_0\subsetneq S_1\subsetneq\ldots\subsetneq S_t=B$ for some $t\leq |B|$, and for each $i\in[t]$ an $S_i$-connector intersecting at most $7i$ supernodes. For every $i \in [t-1]$, the $S_{i+1}$-connector will extend the previously constructed $S_i$-connector with vertices from at most seven new supernodes.

Let $S_0=\emptyset$, and for technical reasons, with slight abuse of notation, let the empty subgraph be our first connector.
Suppose for some $0 \le i \le t-1$ we have found an $S_i$-connector where $S_i\subsetneq B$, and let us find an $S_{i+1}$-connector. Consider the graph obtained by removing from $G$ all supernodes that have a vertex in the $S_i$-connector, and denote by $G_i$ a reduced minor of that graph. Note that $G_i$ is a $K_{f'}$-minor for $f'\geq f-7i>7|B|-7i>7$ and that $G_i \preceq_A G$.

Let $C$ be a permissible cycle in $G_i$ that is incident to at most five supernodes. 
Note that $C$ consists of vertices that are in at least three different supernodes (as any pair of supernodes induces a tree in a reduced minor),
 and choose $T_1,T_2$ and $T_3$ to be three distinct supernodes that intersect $C$ in precisely one non-empty path (see Remark~\ref{rem:permissible}). Now, let $N_1,N_2$ and $N_3$ be three supernodes in $G_i$ disjoint from $C$.

 In what follows, to simplify our notation, we will use the same names for corresponding supernodes in $G$ and in the reduced subminor $G_i$, anticipating that a supernode might lose some vertices and that some edges can be contracted when passing from $G$ to $G_i$. We also use the same names for subgraphs $H$ of $G_i$, which correspond to subdivisions of $H$ in $G$. Observe that by Remark~\ref{rem:minimal}, the weight of the subdivision paths in $G$ is the weight of the corresponding edge of $H$ in $G_i$.

For $j \in [3]$, let $u_j \in V(G_i)$ be the vertex in $N_j$ that has a neighbour in $T_j$, and let $Q_j$ be the shortest path from $u_j$ to $C \cap T_j$ in $G_i[N_j \cup T_j]$, observing that the interval vertices of $Q_j$ all lie in $T_j$ (see Figure \ref{fig:addingcycle}). The endpoints of these paths split $C$ into three paths in $G_i$, whose weights we denote by $x_1$, $x_2$ and $x_3$, in such a way that for every $j \in [3]$ the segment of $C$ between the endpoints of $Q_{j-1}$ and $Q_{j+1}$ is of total weight $x_j$\footnote{summation of indices with modular arithmetic}. Let $\delta_1(C)=x_1+x_2-x_3$, $\delta_2(C)=x_2+x_3-x_1$ and $\delta_3(C)=x_3+x_1-x_2$. 

\begin{figure}[h!]
    \centering
    \begin{tikzpicture}[xscale=1,main node/.style={circle,draw,color=black,fill=black,inner sep=0pt,minimum width=3pt}]
        
	    \node(a) at (0,0) [label=left:$N$]{};
	    \node(b) at (8,0){};
	    \fitellipsis{a}{b};

	    inside circles
	    \draw[thick](2,1.5) circle (0.75);
	        \node[main node] (a1) at (1.7,1){};
	        \node[main node] (b1) at (2.3,2){};
	        \path [draw=blue,snake it] (a1) -- (b1) ;
	    \draw[thick](6,1.5) circle (0.75);
	        \node[main node] (a2) at (6.3,1){};
	        \node[main node] (b2) at (5.7,2){};
	        \path [draw=blue,snake it] (a2) -- (b2) ;
	    \draw[thick](2,-1.5) circle (0.75);
	        \node[main node] (a6) at (1.7,-1){};
	        \node[main node] (b6) at (2.3,-2){};
	        \path [draw=blue,snake it] (a6) -- (b6) ;
	    \draw[thick](6,-1.5) circle (0.75);
	        \node[main node] (a4) at (6.3,-1){};
	        \node[main node] (b4) at (5.7,-2){};
	        \path [draw=blue,snake it] (a4) -- (b4) ;

	        \node[main node] (a3) at (5.7,0){};
	        \node[main node] (b3) at (7,0){};
	        \path [draw=blue,snake it] (a3) -- (b3) ;
	        \node[main node] (a7) at (2.3,0){};
	        \node[main node] (b7) at (1,0){};
	        \path [draw=blue,snake it] (a7) -- (b7) ;

	        \draw[thick, blue] (b1)--(b2);
            \draw[thick, blue] (a2)--(a3);
            \draw[thick, blue] (b3)--(a4);
            \draw[thick, blue] (b4)--(b6);
    
            \draw[thick, blue] (a6)--(b7);
            \draw[thick, blue] (a7)--(a1);

    	    \node(a) at (0,2.5) {$N_1$};
            \draw[thick](0,1.5) circle (0.75);
	        \node[main node] (c1) at (0.7,1.5){};
	         \node(a) at (0.4,1.5) {$u_1$};
	        
	        \node[main node] (d1) at (1.3,1.5){};
	        \path [draw=red,thick] (c1) -- (d1) ;
	        \path [draw=red,snake it] (d1) -- (1.9,1.5) ;
	        
	       	    \node(a) at (8,2.5) {$N_2$};
	       	    \node(a) at (7.6,1.5) {$u_2$};
	        \draw[thick](8,1.5) circle (0.75);
	        \node[main node] (c1) at (7.3,1.5){};
	        \node[main node] (d1) at (6.7,1.5){};
	        \path [draw=red,thick] (c1) -- (d1) ;
	        \path [draw=red,snake it] (d1) -- (5.96,1.5);
       
           	    \node(a) at (0,-2.5) {$N_3$};
           	    \node(a) at (0.4,-1.5) {$u_3$};
            \draw[thick](0,-1.5) circle (0.75);
	        \node[main node] (c1) at (0.7,-1.5){};
	        \node[main node] (d1) at (1.3,-1.5){};
	        \path [draw=red,thick] (c1) -- (d1) ;
	        \path [draw=red,snake it] (d1) -- (2.02,-1.5) ;
	    
 \node(a) at (2,2.5) {$T_1$};
  \node(a) at (6,2.5) {$T_2$};
   \node(a) at (2,-2.5) {$T_3$};
   
    \end{tikzpicture}
    \caption{The circles represent the supernodes that contain a single path in the cycle $C$, while the ellipse represents the supernode $N$ that intersects $C$ in two paths. The red curves from $u_j$ to the blue paths represent the paths $Q_j$.}
   \label{fig:addingcycle}
\end{figure}
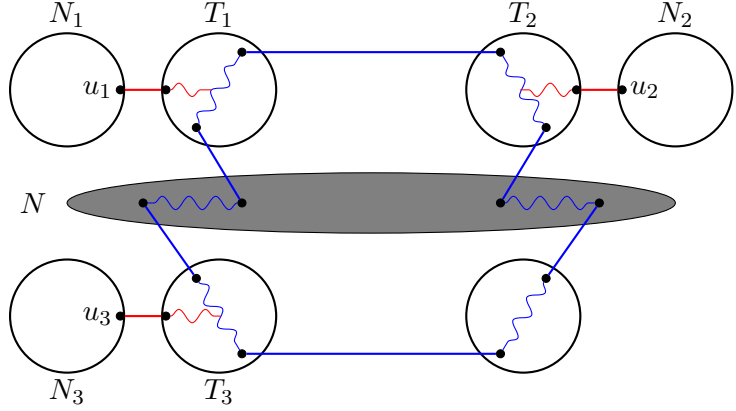

Suppose first that there is some $j \in [3]$ such that $S_i + \{0,\delta_j(C)\} \neq S_i$; by relabelling, we may assume that $j = 1$. Then the endpoints of $Q_1$ and $Q_2$ split $C$ into two paths $P_1$ and $P_2$ of weights $x_3$ and $x_1+x_2$. Now we complete the construction of the $S_{i+1}$-connector, where we set $S_{i+1}=S_i+\{0,\delta_1(C)\}$. 
If $i=0$, then let the $S_{i+1}$-connector be the concatenation of $Q_1,C$ and $Q_2$. Otherwise, let $z$ be the last vertex of the $S_i$-connector that we want to extend, and let $Z$ be the supernode in $G$ containing it. Prolong the last path of this connector by attaching to $z$ the shortest path from $z$ to $Q_1$ in $G[Z\cup N_1]$, and then attach to this path the remaining part of $Q_1$ whose endpoint is on the cycle $C$. Now, attach to the connector the cycle $C$. To complete the $S_{i+1}$-connector, we wish to attach the path $Q_2$. Note that the last vertex $u_2$ of $Q_2$ was the only vertex in $Q_2$ from $N_2$ in the graph $G_i$, but when we lift from $G_i$ to $G$, the path $Q_2$ may have more vertices from $N_2$. Hence we remove all vertices from $Q_2 \cap N_2$ except the one adjacent to $T_2$, and attach the obtained path to $C$. In this fashion, we maintain the desired property of the last vertex in the connector, as the vertex in $Q_2$ adjacent to $T_2$ is now the only vertex from $N_2$ in the connector. Also note that that we added vertices from at most seven new supernodes to the $S_i$-connector to make this $S_{i+1}$-connector. Finally, $P_1$ is a permissible path,\footnote{Either all supernodes incident to $C$ intersect $C$ in at most one path, and then it is clear that the same holds for $P_1$. Otherwise, by Remark \ref{rem:permissible} we know that exactly 5 supernodes are incident to $C$, and the cyclic ordering of the supernodes as they appear on $C$ is $T,-,-,T,-,-$ (where $T$ is the repeated supernode, and each '-' sign is uniquely assigned to a distinct supernode). Now it is clear that for every choice of $P_1$ as in the proof, $P_1$ intersects $G[T]$ in at most one path.} and by construction it is easy to see that the base path of the $S_{i+1}$-connector is also permissible.

If we are able to iterate this process, after at most $|B|$ steps we would obtain the desired $B$-connector (meeting at most $7|B|$ supernodes). On the other hand, if an iteration fails, it must mean that for some $0 \le i \le |B|-1$ and for all $j \in [3]$, we have $S_i + \{0,\delta_j(C)\} = S_i$ for every choice of a permissible cycle $C$ incident to at most five supernodes. Let $\mathcal{C}$ be the collection of those cycles in $G_i$. We conclude, by Lemma~\ref{lem:subgroup}, that the group $B'=\langle\{\delta_j(C)\mid j\in [3], C\in \mathcal{C}\}\rangle$, is contained in $S_i$. Since we do not yet have a $B$-connector, we must have $B' < B$.
 Furthermore, observe that $w(C)=\delta_1(C)+\delta_2(C)+\delta_3(C)\in B'$ for all $C\in \mathcal{C}$. Appealing to Lemma~\ref{lem:even paths}, we deduce that $G_i$ is a $B'$-restricted reduced $K_{f'}$-minor, where, since we have only lost the supernodes in the $S_i$-connector, $f'\geq f-7i\geq f-7|B|$.
\end{proof}

\section{The proof}\label{sec:proof}
We are now ready to prove our main result, which we restate here for the convenience of the reader. 
\abelian*
\begin{proof}
Let $G$ be an $A$-weighted $K_f$-minor, which we may assume is reduced (see Remark~\ref{rem:minimal}). The idea of the proof is to find $m = e(H)$ disjoint connectors in $G$, each of which is contained in the union of at most $7|A|$ supernodes. We will route the subdivision paths through these connectors, so that we can then apply switches within the connectors to ensure each path is of weight $0 \in A$.

We begin by constructing $A$-connectors for as long as we are able. If at some point no further such connector can be found, this will reveal some structural information about the edge weights in the remaining graph, as given by Proposition~\ref{prop:connectors}. This information will allow us to pass to some subgroup $A' < A$, and we shall then construct $A'$-connectors instead. We repeat this process until we have the desired number of connectors, keeping track of their various types. We shall then use these in conjunction with the remaining vertices to build an $A$-divisible subdivision of $H$.

\medskip
 
We initially set $f_0=f$, $B_0=A$ and $G_0=G$. For each iteration of our process, suppose we have $i \ge 0$ and $f_i$, $B_i$ and $G_i$ such that $G_i \preceq_A G$ is a $B_i$-restricted $A$-weighted reduced $K_{f_i}$-minor; we shall then find $f_{i+1}$, $B_{i+1}$ and $G_{i+1}$ as follows.

Applying Proposition~\ref{prop:connectors} to $G_i$, we either find a $B_i$-connector $F_i$ using at most $7|B_i|$ supernodes, or we obtain a proper subgroup $B' < B_i$ and a reduced subminor $G'\preceq_A G_i \preceq_A G$ that is a $B'$-restricted reduced $K_{f'}$-minor, where $f'\geq f-7|B_i|$. 
 \begin{itemize}
     \item[(1)] 
 In the former case, we increment our iteration counter, and let $G_{i+1}$ be the graph obtained by reducing $G_i$ after removing the supernodes used in $F_i$. We set $B_{i+1}=B_i$ and we let $f_{i+1}$ be the number of supernodes in $G_{i+1}$, noting that $f_{i+1}\geq f_i-7|B_i|$. 
 \item[(2)] In the latter case, we remain in the current iteration, but update $G_{i}=G'$, $B_{i}=B'$ and we set $f_i=f'$. We repeat the process, applying Proposition~\ref{prop:connectors} to $G_i$ again, until we encounter the first case.
 \end{itemize}
 
Every time we enter case (2) in this inductive process, the size of the group $B_i$ we are working with decreases by a factor of at least two. Thus, we encounter case (2) at most $\lceil \log_2 |A| \rceil$ times. Moreover, after $j \ge 0$ instances of case (2), the size of the group $B_i$ is at most $2^{-j} |A|$. Since we lose at most $7|B_i|$ supernodes every time we fall into case (2), we can bound the total number of supernodes lost in case (2) during the process by $14|A| = \sum_{j \ge 0} 7 \cdot 2^{-j} |A|$.

As the number of occurrences of the second case is bounded, we must eventually meet the first case $m$ times, after which we will have the desired $m$ connectors. Each connector costs us at most $7|B_i| \le 7|A|$ supernodes, and so by the time we reach the final graph $G_m$, we still have at least $f - m \cdot 7|A| - 14|A| \ge 4 n \sigma(A)$ supernodes, and for each $0 \le i < m$ we have a $B_i$-connector $F_i \subseteq G_i$.

Now pick $4n\sigma(A)$ of the supernodes in $G_m$ and group them into clusters of four.
For each such cluster of supernodes $N_1,N_2,N_3$ and $N_4$, let $n_2,n_3$ and $n_4$ be the vertices in $N_2,N_3$ and $N_4$ respectively that have a neighbour in $N_1$. Mark the central vertex in $G_m[N_1]$ for the neighbours of $n_2,n_3$ and $n_4$ in $N_1$.
Let $\{x_i\mid i\in [n\sigma(A)]\}$ be the collection of the marked central vertices, and denote by $X_i$ the supernode of $x_i$ for each $i$, and we call $X_i$ the \emph{central} supernode of its cluster.

Next we show that we can pick $n$ of those central vertices $$\{y_i\mid i\in[n]\}\subseteq \{x_i\mid i\in [n\sigma(A)]\},$$ where we denote the supernodes of $y_i$ by $Y_i$, such that, for every pair $i,j \in [n]$, the weight of the unique path between $y_i$ and $y_j$ in the tree $G_m[Y_i\cup Y_j]$ is in $B_m$.

Indeed, let $B':=\{a\in A\mid 2a\in B_m\}$. Since $G_m$ is $B_m$-restricted, we have that every edge $e$ in $G_m$ satisfies $w(e)\in B'$. Noting that $B_m$ is, trivially, a subgroup of $B'$, we can define the quotient group $B^*=B'/B_m$ and, by definition of $\sigma$, we have $|B^*|=\frac{|B'|}{|B_m|}\leq \sigma(A)$.

We now colour the vertices $x_i$ with the elements of $B^*$. We start by giving $x_1 \in X_1$ the colour $0 + B_m \in B^*$. Then, for each $i \ge 2$, let $P_{x_i}$ be the unique path between $x_1$ and $x_i$ in $G[X_1 \cup X_i]$, and colour the vertex $x_i$ with $w(P_{x_i})+B_m \in B^*$.

We choose the vertices $y_i$ arbitrarily from the largest colour class, which is of size at least $\frac{n\sigma(A)}{|B^*|}  \ge\frac{n\sigma(A)}{\sigma(A)}=n$. Let us show that the path $P_{ij}$ between $y_i$ and $y_j$ in $G_m[Y_i\cup Y_j]$ is of weight in $B_m$ for all pairs $i,j\in[n]$. Let $P_i=P_{y_i}, P_j=P_{y_j}$, $Q_1=P_i\cap P_{ij}$, $Q_2=P_j\cap P_{ij}$, and $Q_3=P_i\cap P_j$, and let the respective weights of the latter three paths\footnote{Note that the intersections $Q_1,Q_2,Q_3$ indeed form paths (possibly of length zero), since $G_m[Y_i],G_m[Y_j]$ and $G_m[X_1]$ are trees.} be $q_1$, $q_2$ and $q_3$ respectively (see Figure~\ref{fig:colouringworks}). Let $C$ be the permissible cycle formed by the edges in $E[P_i\cup P_j\cup P_{ij}]\setminus E[Q_1\cup Q_2\cup Q_3]$. Observe that $w(P_{ij})=w(C)+2(q_1+q_2+q_3)-(w(P_i)+w(P_j))$. Since $G_m$ is $B_m$-restricted and since we chose $y_i$ and $y_j$ to be of the same colour, we have $w(P_i)+w(P_j)\in 2w(P_i)+B_m = B_m$. Furthermore, since $C$ is permissible in $G_m$, we have $w(C)\in B_m$. Finally $2(q_1+q_2+q_3)\in B_m$, again from $G_m$ being $B_m$-restricted. This yields $w(P_{ij})\in B_m$, as we wanted.

\begin{figure}
    \centering
    \begin{tikzpicture}[scale=1,main node/.style={circle,draw,color=black,fill=black,inner sep=0pt,minimum width=3pt}]
        

	    \draw[thick](-1.5,0) circle (1);
	        \node[main node] (a1) at (-2,-0.5){};
	         \node(a)[scale=0.8] at (-2,-0.7) {$y_i$};
	          \node(a)[scale=0.8] at (-1.9,-0.1) {$Q_1$};
	        \node[main node] (a2) at (-1.5,0){};
	        \node[main node] (a3) at (-1.3,0.7){};
	        \node[main node] (a4) at (-0.9,0.1){};
	        
	        \path [draw=black,snake it] (a1) -- (a2) ;
	        \path [draw=black,snake it] (a2) -- (a3) ;
	        \path [draw=black,snake it] (a4) -- (a2) ;
	        
	    \draw[thick](1.5,0) circle (1);
	      \node[main node] (b1) at (2,-0.5){};
	      \node(a)[scale=0.8] at (1.93,-0.7) {$y_j$};
	       \node(a)[scale=0.8] at (2,-0.1) {$Q_2$};
	        \node[main node] (b2) at (1.5,0){};
	        \node[main node] (b3) at (1.3,0.7){};
	        \node[main node] (b4) at (0.9,0.1){};
	        
	        \path [draw=black,snake it] (b1) -- (b2) ;
	        \path [draw=black,snake it] (b2) -- (b3) ;
	        \path [draw=black,snake it] (b4) -- (b2) ;
	     
	    \draw[thick](0,2.5) circle (1);
	        \node[main node] (c1) at (0,3.2){};
	        \node(a)[scale=0.8] at (0,3.35) {$x_1$};
	         \node(a)[scale=0.8] at (0.26,2.8) {$Q_3$};
	        \node[main node] (c2) at (0,2.5){};
	        \node[main node] (c3) at (-0.5,1.8){};
	        \node[main node] (c4) at (0.5,1.8){};
	        
	        \path [draw=black,snake it] (c1) -- (c2) ;
	        \path [draw=black,snake it] (c2) -- (c3) ;
	        \path [draw=black,snake it] (c4) -- (c2) ;
	     
	       \path [draw=black,thick,dotted] (a3) -- (c3) ;
	        \path [draw=black,thick,dotted] (a4) -- (b4) ;
	         \path [draw=black,thick,dotted] (c4) -- (b3) ;
	 
\node(a) at (-2.8,0) {$Y_i$};
\node(a) at (2.8,0) {$Y_j$};
\node(a) at (1.1,3.2) {$X_1$};

   
    \end{tikzpicture}
    \caption{The dotted lines represent edges between two supernodes, while the solid curves represent paths in $G_m$}
   \label{fig:colouringworks}
\end{figure}
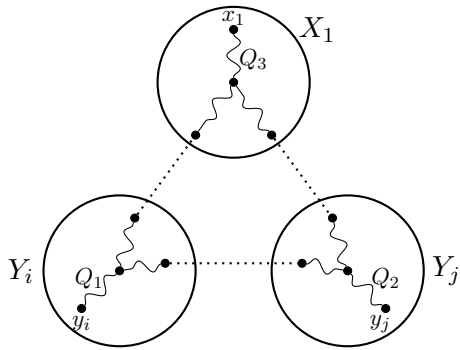

We now set about building an $A$-divisible subdivision of $H$. If we denote by $\{v_i \mid i \in [n]\}$ the vertices of $H$, $y_i$ will be the branch vertex corresponding to $v_i$. Let $\mathcal{V} = \{ y_i \mid i \in [n] \}$. For a fixed arbitrary ordering $e_1,\ldots, e_m$ of the edges of $H$, we now show how to construct the subdivision paths between the corresponding branch vertices in $\mathcal{V}$.

For each $k \in [m]$, the subdivision path of weight $0\in A$ corresponding to the edge $e_k$ will be constructed within the graph $G_k$. Since passing to a reduced subminor does not change the weights of paths, the corresponding path in $G$ will also be of weight $0$. After describing these paths, we will explain why they are internally vertex-disjoint.

Importantly, as an invariant during the construction process we will require that for every edge $e_k$ with $k \in [m]$ and with endpoints $v_i$ and $v_j$, the subdivision path representing $e_k$ is vertex-disjoint from all supernodes contained in clusters corresponding to vertices $y_t$ with $t \in [n]\setminus \{i,j\}$, and that it intersects the clusters of $y_i$ and $y_j$ only in their central supernodes as well as at most one other supernode from the cluster.

Now suppose that for some $k \in [m]$ we have already constructed subdivision paths for $e_1,\ldots,e_{k-1}$, and let $e_k=\{v_i,v_j\}$. We thus need to build a subdivision path connecting $y_i$ and $y_j$. Recall that $y_i$ came from the supernode $Y_i$, which was part of a four-supernode cluster. Let $Y_i^1$, $Y_i^2$ and $Y_i^3$ be the other three supernodes from that cluster, and let $y_i^1, y_i^2$ and $y_i^3$ be the vertices in these supernodes that are adjacent to $Y_i$ (for which $y_i$ was the central vertex). We define $y_j^1 \in Y_j^1, y_j^2 \in Y_j^2$ and $y_j^3 \in Y_j^3$ similarly with respect to $y_j$.

By our invariant on how subdivision-paths interact with the clusters, at most two of the three non-central supernodes $Y_i^1, Y_i^2, Y_i^3$ respectively $Y_j^1, Y_j^2, Y_j^3$ are intersected by subdivision-paths corresponding to $e_1,\ldots,e_{k-1}$. Hence, without loss of generality we may assume that $Y_i^1$ and $Y_j^1$ have not been used previously in the construction of subdivision paths. Let $P$ be the path in $G_m[Y_i \cup Y_j]$ between $y_i$ and $y_j$, $Q^k_1$ the path in $G_m[Y_i \cup Y_i^1]$ between $y_i$ and $y_i^1$, and $Q^k_2$ the path in $G_m[Y_j \cup Y_j^1]$ between $y_j$ and $y_j^1$. We then lift these paths to the corresponding paths in $G_k$; that is, we reverse any contractions that may have occurred when passing from $G_k$ to $G_m$.

First, recall that our choice of $y_i$ and $y_j$ guarantees $w_{G_m}(P) \in B_m$, and so we have $w_{G_k}(P) \in B_m \le B_k$. Next, note that the paths $Q^k_1$ and $Q^k_2$ may gain some additional vertices in $G_k$. We define $p_1 \in Y_i^1$ to be the vertex on $Q^k_1$ that is adjacent to $Y_i$, and let $\bar{Q}^k_1 \subseteq Q^k_1$ be the path in $G_k[Y_i \cup Y_i^1]$ from $y_i$ to $p_1$. We define $p_2$ and $\bar{Q}^k_2$ analogously.

Now let $t_1$ and $t_2$ be the endpoints of the $k^\text{th}$ connector $F_k$, which is a $B_k$-connector in $G_k$, and let $T_1$ and $T_2$ be their corresponding supernodes. Let $\bar{Q}^k_3$ be the path in $G_k[Y_i^1\cup T_1]$ connecting $p_1$ to $t_1$, and let $\bar{Q}^k_4$ be the path in $G_k[Y_j^1\cup T_2]$ between $p_2$ and $t_2$ (see Figure \ref{fig:coloringworks}). Finally, let $\bar{Q}^k_5$ be the base path between $t_1$ and $t_2$ in the connector $F_k$. Observe that the concatenation of ${P},\bar{Q}^k_1,\bar{Q}^k_3,\bar{Q}^k_5,\bar{Q}^k_4$ and $\bar{Q}^k_2$ gives a permissible cycle in $G_k$, whose weight is hence in $B_k$. Indeed, the base path $\bar{Q}^k_5$ is permissible by construction, while the other paths are contained within two supernodes, and hence must be permissible. As the paths do not share any supernodes beyond those of their common endpoints, it is then easy to verify that their union is permissible (with each supernode intersecting the cycle in at most one path).

Therefore, by removing the edges of $P$ from the cycle (or, equivalently, by concatenating $\bar{Q}^k_1,\bar{Q}^k_3,\bar{Q}^k_5,\bar{Q}^k_4$ and $\bar{Q}^k_2$), we obtain a path $\bar{Q}^k$ in $G_k$ between $y_i$ and $y_j$ whose weight is in $B_k$. We can thus perform the appropriate switches in the $B_k$-connector $F_k$ (thereby modifying the path $\bar{Q}^k_5$) to find a path in $G_k$ between $y_i$ and $y_j$ of weight $0\in A$. Finally, this lifts to a path $Q_{ij}$ between $y_i$ and $y_j$ in the original graph $G$ of weight $0\in A$, which is what we sought.

\begin{figure}[ht]
    \centering
    \begin{tikzpicture}[scale=0.7,main node/.style={circle,draw,color=black,fill=black,inner sep=0pt,minimum width=3pt}]
  
	    \draw[thick](0,0.75) circle (0.5);
	    \draw[thick](0,2.25) circle (0.5);
	    
	    \draw[thick](0,-2.25) circle (0.5);
	    
	    \path [draw=black,thick] (0,-2.75) -- (0,-3.25) ;
	    \path [draw=black,thick] (0,2.75) -- (0,3.25) ;
	    \path [draw=black,thick] (0,1.25) -- (0,1.75) ;
	    \path [draw=black,thick] (0,0.25) -- (0,0 ) ;
	    \path [draw=black,thick] (0,-1.5) -- (0,-1.75) ;
	    \draw[loosely dotted,thick] (-0,-0.1)--(0,-1.4);
	    
	        \node[main node] (a1) at (0,-3.25){};
	        \node(a)[scale=1] at (0,-3.6) {$t_1$};
	  
	        \node[main node] (a2) at (0,3.25){};
	        \node(a)[scale=1] at (0,3.6) {$t_2$};

	        \draw[thick,dotted](2,3) circle (1);
	         \draw[thick,dotted](4.5,2.5) circle (1);
	         
	         \draw[thick,dotted](2,-3) circle (1);
	         \draw[thick,dotted](4.5,-2.5) circle (1);
	   
	    \node[main node] (c1) at (2,3){};
	    \node(a)[scale=1] at (2,3.5) {$y_j^1$};
	    \node(a)[scale=1] at (2,1.5) {$Y_j^1$};
	    
	    \node[main node] (p2) at (2.8,2.84){};
	    \node(a)[scale=1] at (2.7,2.5) {$p_2$};
	    
	    \node[main node] (c2) at (4.5,2.5){};
	    \node(a)[scale=1] at (4.5,3) {$y_j$};
	     \node(a)[scale=1] at (5.8,3) {$Y_j$};
	    
	    \node[main node] (d1) at (2,-3){};
	    \node(a)[scale=1] at (2,-3.5) {$y_i^1$};
	    \node(a)[scale=1] at (2,-1.5) {$Y_i^1$};
	    
	    \node[main node] (p1) at (2.8,-2.84){};
	    \node(a)[scale=1] at (2.7,-2.5) {$p_1$};
	    
	    \node[main node] (d2) at (4.5,-2.5){};
	    \node(a)[scale=1] at (4.5,-3) {$y_i$};
	      \node(a)[scale=1] at (5.8,-3) {$Y_i$};
	    
	    \draw [decorate,decoration={snake,amplitude=.4mm,segment length=2mm,post length=0.1mm}] (c1)--(c2) ;
	    \draw [decorate,decoration={snake,amplitude=.4mm,segment length=2mm,post length=0.1mm}] (a2) to [out = 330, in = 180] (p2) ;
	  
	    \draw [decorate,decoration={snake,amplitude=.4mm,segment length=2mm,post length=1mm}]
	     (d1)--(d2);
	    \draw [decorate,decoration={snake,amplitude=.4mm,segment length=2mm,post length=1mm}]
	     (a1) to [out = 30, in = 180] (p1);

    \end{tikzpicture}
    \caption{This figure illustrates how the path connecting $y_i$ and $y_j$ is constructed. Dotted circles represent supernodes, while squiggly lines represent paths.}
    \label{fig:coloringworks}
\end{figure}
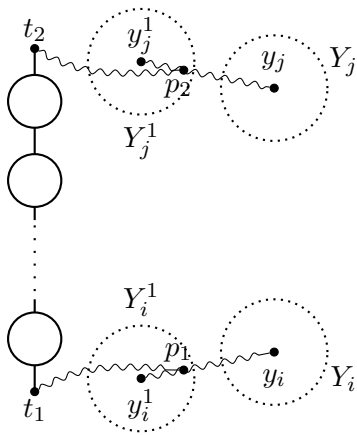

To finish, we show that the subdivision paths we have constructed are internally vertex-disjoint. Indeed, with the exception of the supernodes $\{Y_i \mid i \in [n] \}$ housing the branch vertices $\{ y_i \mid i \in [n] \}$, the subdivision paths pass through disjoint sets of supernodes. Within the supernode $Y_i$, since $y_i$ was the central vertex for $y_i^1$, $y_i^2$ and $y_i^3$, it follows that the subdivision paths are disjoint apart from $y_i$, as required. Thus, the subdivision paths are internally vertex-disjoint, and we have constructed a genuine $A$-divisible subdivision of $H$.
\end{proof}

\section{Concluding remarks} \label{sec:conclusion}

In this paper, we have addressed a problem of Alon and Krivelevich~\cite{alon} on divisible subdivisions, showing that if $H$ is a subcubic graph on $n$ vertices with $m$ edges, $A$ is a finite abelian group and $f \ge 7m|A| + 4n \sigma(A) + 14|A|$, then every $A$-weighted $K_f$-minor contains an $A$-divisible $H$-subdivision. In particular, by taking $A = (\mathbb{Z}_q, +)$, it follows that having $f \ge 7mq + 8n + 14q$ suffices to ensure the existence of an $H$-subdivision whose subdivision paths are all of length divisible by $q$.

This bound is tight up to a multiplicative constant. Indeed, since each subdivision path must have length at least $q$, a $q$-divisible $H$-subdivision requires at least $m(q-1) + n$ vertices. Thus, for $f = m(q-1) + n - 1$, $K_f$ itself is a $K_f$-minor without any $q$-divisible $H$-subdivisions. Having failed to find any constructions that yield a better lower bound, we suspect that this trivial bound may in fact be the true answer. Part of the difficulty in proving this is that $K_f$-minors can have a wide range of structures, and so it is easier to restrict the class of host graphs under consideration. A natural first step would be to only look at subdivisions of $K_f$.

\begin{problem}
Given $q \in \mathbb{N}$ and a subcubic graph $H$ with $n$ vertices and $m$ edges, is it true for $f = m(q-1) + n$ that every subdivision of $K_f$ contains a $q$-divisible $H$-subdivision?
\end{problem}

\noindent Through some case analysis, we could answer this in the affirmative when $q=2$ and $H = K_4$, providing some scant evidence in support of a positive answer. It would be interesting to see a general argument that applies to all $q$ and $H$.
\medskip

Recall that we seek $q$-divisible subdivisions because we cannot be guaranteed to find anything else --- there are $K_f$-minors where every path between vertices of degree at least three has length divisible by $q$. Our proof shows that this is essentially the only obstruction, since the $\mathbb{Z}_q$-connectors allow us to obtain paths of any parity we wish. Thus, given $q \ge 2$, a subcubic graph $H$ and, for every edge $e \in E(H)$, a residue $r_e \in \mathbb{Z}_q$, then in any $K_f$-minor $G$ we can either find an $H$-subdivision such that, for each edge $e$, the subdivision path corresponding to $e$ has length $r_e$ modulo $q$, or we find a proper subgroup $W < \mathbb{Z}_q$ and a subgraph $G' \subseteq G$ that is a $K_{f'}$-minor for some $f' \ge f - 7qm$, such that every path $P$ in $G'$ between vertices of degree at least three has length satisfying $2 \ell(P) \in W$. In particular, if $q$ is prime, then it divides $\ell(P)$.

\medskip

The other restriction we imposed is that the graph $H$ should be subcubic. This is again necessary, as there are $K_f$-minors with maximum degree three. However, we can circumvent this obstruction by including a large minimum degree requirement. Thomassen~\cite{thomassen} proved that for any graph $H$, all graphs of sufficiently large minimum degree contain $q$-divisible $H$-subdivisions (in fact, one can impose a much wider range of modular restrictions on the path lengths). In light of our results, it is natural to ask how much the bound on the minimum degree can be reduced when the host graph is a $K_f$-minor.

\begin{problem}
Given $f,q \ge 2$ and a graph $H$, what is the smallest $d = d(H,q,f)$ such that every $K_f$-minor $G$ with $\delta(G) \ge d$ contains a $q$-divisible $H$-subdivision?
\end{problem}

Finally, returning to the setting of group-weighted graphs, we observe that while our bound gives the correct order of magnitude in the case $A = \mathbb{Z}_q$, there is scope for improvement for other abelian groups. Indeed, when $H$ is a cycle, Scheucher, Sidorenko and the third author (personal communication) show that when $A = \mathbb{Z}_2^d$, $f$ need only grow logarithmically with the size of the group. It is then natural to presume that such savings can also be made for other subcubic graphs.

\begin{problem}
Given $d \ge 2$ and a subcubic graph $H$ on $n$ vertices with $m = \Omega(n)$ edges, is there some $f = O(dm)$ such that every $\mathbb{Z}_2^d$-weighted $K_f$-minor contains a $\mathbb{Z}_2^d$-divisible $H$-subdivision?
\end{problem}

\end{document}